\documentclass[12pt]{article}

\usepackage{amssymb,amsmath,amsthm,sectsty,url,mathrsfs,graphicx}
\usepackage[letterpaper,hmargin=1.0in,vmargin=1.0in]{geometry}
\usepackage[colorlinks,linkcolor=black,citecolor=black,filecolor=black,urlcolor=black]{hyperref}
\usepackage{color}



\sectionfont{\large}
\subsectionfont{\normalsize}
\numberwithin{equation}{section}

\newtheorem{atheorem}{Theorem}

\newtheorem{maintheorem}{Theorem}
\newtheorem{theorem}{Theorem}[section]
\newtheorem{lemma}[theorem]{Lemma}
\newtheorem{proposition}[theorem]{Proposition}
\newtheorem{corollary}[theorem]{Corollary}
\newtheorem{fact}[theorem]{Fact}
\newtheorem{definition}[theorem]{Definition}
\newtheorem{remark}[theorem]{Remark}
\newtheorem{conjecture}[theorem]{Conjecture}

\newtheorem{example}[theorem]{Example}


\newcommand{\Var}{{\bf Var}}

\newcommand{\PP}{\mathscr{P}}

\renewcommand{\Pr}{ \mathrm P}

\newcommand{ \rel}{ t_{\mathrm{rel}} }
\newcommand{ \reln}{ t_{\mathrm{rel}}^{(n)} }
\newcommand{ \mix}{ t_{\mathrm{mix}} }
\newcommand{ \mixn}{ t_{\mathrm{mix}}^{(n)} }
\newcommand{ \mixmu}{ t_{\mathrm{mix},\mu} }
\newcommand{ \mixmun}{ t_{\mathrm{mix},\mu_{n}}^{(n)} }

\newcommand{ \hit}{ \mathrm{hit} }

\newcommand{ \h}{ \mathrm{H} }
\newcommand{ \TV}{ \mathrm{TV} }

\newcommand{\eps}{\epsilon }
\renewcommand{\epsilon}{\varepsilon}
\newcommand{\del}{\delta }

\newcommand{\la}{\lambda}

\newcommand{\gO}{\Omega }

\newcommand{\ssfrac}[2]{\mbox{\footnotesize $\frac{#1}{#2}$}}
\newcommand{\half}{\ssfrac{1}{2}}

\newcommand{\pin}{\ensuremath{\pi_n}}

\DeclareMathSymbol{\leqslant}{\mathalpha}{AMSa}{"36} 
\DeclareMathSymbol{\geqslant}{\mathalpha}{AMSa}{"3E} 
\DeclareMathSymbol{\eset}{\mathalpha}{AMSb}{"3F}     
\renewcommand{\leq}{\;\leqslant\;}                   
\renewcommand{\geq}{\;\geqslant\;}                   




\newcommand{\N}{\mathbb N}
\newcommand{\R}{\mathbb R}
\newcommand{\Z}{\mathbb Z}


\usepackage[normalem]{ulem}

\begin{document}

\title{A technical report on hitting times, mixing and cutoff }
\author{Jonathan Hermon
\thanks{
University of Cambridge, Cambridge, UK. E-mail: {\tt jonathan.hermon@statslab.cam.ac.uk}. Financial support by
the  EPSRC grant EP/L018896/1.}
}
\date{}
\maketitle

\begin{abstract}
Consider a sequence of continuous-time irreducible reversible Markov chains and a sequence of initial distributions, $\mu_n$. Instead of performing a worst case analysis, one can study the rate of convergence to the stationary distribution starting from these initial distributions. The sequence is said to exhibit (total variation) $\mu_n$-cutoff if
the convergence to stationarity in total variation distance is abrupt, w.r.t.~this sequence of initial
distributions.

In this work we give a characterization of $\mu_n$-cutoff (and also of total-variation mixing) for an arbitrary sequence
of initial distributions $\mu_n$ (in the above setup). Our characterization is expressed  in terms of hitting times of sets which are ``worst"
(in some sense) w.r.t.~$\mu_n$. 

Consider a Markov chain on $\Omega$ whose stationary distribution is $\pi$.  Let $t_{\mathrm{H}}(\alpha) :=\max_{x
\in \Omega,A \subset \Omega :\,\pi(A)
\geq \alpha}\mathbb{E}_{x}[T_{A}]$ be the expected hitting time of the set of stationary probability at least $\alpha$ which is ``worst in expectation"
(starting from the worst starting state). The connection between $t_{\mathrm{H}}(\cdot) $ and the mixing time of the chain  was previously studied
by Aldous and later by Lov\'asz and Winkler, and was recently refined   by Peres and Sousi and independently by Oliveira. In this work we
further refine this connection and show that $\mu_n$-cutoff can be characterized in terms of concentration of hitting times (starting from $\mu_n$) of sets which are worst in expectation w.r.t.~$\mu_n$. Conversely, we construct  a counter-example
which demonstrates that in general cutoff (as opposed to cutoff w.r.t.~a certain sequence of initial distributions) cannot be characterized in this manner.

Finally, we also prove that there exists an absolute constant $C$ such that for any Markov chain  $\epsilon( t_{\h}(\epsilon)-t_{\h}(1-\epsilon))
\leq C\rel  |\log \epsilon|$, for all $0< \epsilon < 1/2$,  where $\rel$ is the inverse of the spectral gap of the additive symmetrization $\half(P+P^*)$.     
\end{abstract}

\paragraph*{\bf Keywords:}
{\small Mixing-time, hitting times, cutoff, finite reversible Markov chains,  maximal inequality, counter-example.
}
\newpage


\section{Introduction}
This work is a continuation of \cite{cutoff}, in which Starr's maximal inequality was used to characterize the cutoff phenomenon for reversible Markov chains in terms of concentration of hitting times of sets which are ``worst" in some sense. Here using the same technique we present several new related results.

The connection between mixing times and hitting times of sets which are ``worst" in some sense goes back to the pioneer work of Aldous \cite{aldous1982some} and the later body of work of Lov\'asz and Winkler \cite{lovasz1998mixing} on stopping rules. Recently, this connection was substantially refined by Peres and Sousi in \cite{peres2011mixing} and independently by Oliveira
\cite{oliveira2012mixing}. All of the aforementioned works considered sets whose hitting time is in some sense the ``worst in expectation"\footnote{In \cite{oliveira2012mixing} and \cite{peres2011mixing} the parameter $t_{\h}(\cdot)$ was considered and in \cite{aldous1982some,lovasz1998mixing} $\max_{\alpha \in (0,1)} \alpha t_{\h}(\alpha)$ was considered, where $t_{\h}(\cdot)$ is defined in Definition \ref{def: worstinexpectation}.}. The results in \cite{cutoff} give a more refined connection between hitting times and mixing times.

 We extend the results of \cite{cutoff} to the case cutoff (resp.~mixing) is considered only w.r.t.~a certain sequence of initial distributions (resp.~initial distribution). We show that in this setup one may interpret ``worst" above as ``maximizing the expected hitting time" w.r.t.~the considered sequence of initial distributions over all sets whose stationary probability is at least some $\alpha \in (0,1)$. Conversely, we show that this may fail when considering cutoff in the usual sense (not only
w.r.t.~a certain sequence of initial distributions).

\medskip

Generically, we shall denote the state space of a Markov chain by $\Omega
$ and its stationary distribution by $\pi$ (or $\Omega_n$ and $\pi_n$, respectively, for the $n$-th chain in a sequence of chains). We say that the chain is finite, whenever $\Omega$ is finite.
Let $(Y_t)_{t=0}^{\infty}$ be an irreducible Markov chain on a finite state
space $\Omega$ with transition matrix $P$ and stationary distribution $\pi$. We denote such a chain by $(\Omega,P,\pi)$. The \textit{time-reversal} of $P$ is $P^*$, given by $P^*(x,y):=\pi(y)P(y,x)/\pi(x)$.
A chain $(\Omega,P,\pi) $ is called \emph{\textbf{reversible}}
if $P=P^*$, i.e. $\pi(x)P(x,y)=\pi(y)P(y,x)$, for all $x,y \in \Omega$. The \textit{additive symmetrization} of $P$ is given by $Q:=\half (P+P^*)$.

\medskip

Periodicity issues can be avoided  by considering the continuous-time version of the chain, $(X_t)_{t
\geq 0}$.
This is a continuous time Markov chain whose heat kernel is defined by $H_t(x,y):=\sum_{k=o}^{\infty}\frac{e^{-t}t^k}{k!}P^t(x,y)$. It is a classic result of probability theory that for any initial condition
the distribution 
of $X_{t}$ converges to $\pi$ when $t$ goes to infinity. The object of the
theory of Mixing time for Markov chain is to
study the characteristic of this convergence (see \cite{aldous2000reversible,levin2009markov}
for self-contained
introductions to the subject). Throughout, we shall consider only continuous time chains, although all our results can be stated also in discrete time, assuming $P(x,x)\geq \delta$ for some $\delta>0$, for all $x \in \Omega$ (in fact, even if this fails, the results are still valid if one replaces the relaxation time by the absolute relaxation time; See \cite[Remark 1.8]{cutoff}).

\medskip

We denote by $\h_{\mu}^t$ ($\h_{\mu}$) the distribution of $X_t$ ($(X_t)_{t
\geq 0}$), given that the initial distribution is $\mu$.   When $\mu=\delta_x$ (where $\delta_x(y)=1_{x=y}$), for some $x \in \Omega$, we simply write $\h_x^t$ ($\h_x$).
We denote the set of probability distributions on a (finite) set $B$ by
$\mathscr{P}(B) $.  For any pair of distributions $\mu,\nu \in \mathscr{P}(B)$,  their \emph{\textbf{total-variation
distance}} is defined to
be
$$\|\mu-\nu\|_\mathrm{TV} := \frac{1}{2}\sum_{x } |\mu(x)-\nu(x)|=\max_{A \subset B}\mu(A)-\nu(A) =\sum_{x
\in B :\, \mu(x)>\nu(x)}\mu(x)-\nu(x).$$ 
The worst-case total variation distance at time $t$ is defined as $d(t):= \max_{x \in \Omega} \|\h_{x}^{t}- \pi\|_\mathrm{TV}$. For  $\mu \in \PP(\Omega)$ let $d_{\mu}(t) :=\|\h_{\mu}^{t}- \pi\|_\mathrm{TV}=\frac{1}{2} \sum_{y \in \Omega}|\sum_{x \in \Omega}\mu(x)H_{t}(x,y)-\pi(y)|$.
The $\epsilon$\textbf{-mixing-time} (resp.~w.r.t.~a fixed initial distribution $\mu$) is defined as  $$t_{\mathrm{mix}}(\epsilon) := \inf \left\{t : d(t) \leq
\epsilon \right\}, \quad (\text{resp. }t_{\mathrm{mix},\mu}(\epsilon)
:= \inf \left\{t : d_{\mu}(t) \leq
\epsilon \right\}). $$ 
When $\epsilon=1/4$ we simply write $\mix$ and $\mixmu $.

\medskip

Recall that if $(\Omega,P,\pi)$ is a finite  irreducible chain, then the additive symmetrization $Q$ is reversible and hence self-adjoint w.r.t.~the inner-product induced by $\pi$ (see Definition \ref{def: L_p distance of measures}). Thus $Q$ has $|\Omega|$ real eigenvalues. Throughout we shall denote them by $1=\lambda_1>\lambda_2 \geq \ldots \geq \lambda_{|\Omega|} \geq -1$ (where $\lambda_2<1$ by irreducibility).
Define the \textit{\textbf{spectral-gap}} and \emph{\textbf{relaxation-time}} of $P$ as $\lambda:=(1-\lambda_2)$ and $t_{\mathrm{rel}}:=1/\la$. The following general relation holds for reversible chains (see
$\cite{levin2009markov}$ Lemmas 20.5 and 20.11),
\begin{equation}
\label{eq: t_relintro}
\\  t_{\mathrm{rel}}|\log\left(2\epsilon \right)| \leq t_{\mathrm{mix}}(\epsilon)
\leq  t_{\mathrm{rel}}
|\log \left( \epsilon \min_x \pi(x) \right)|.
\end{equation}
Next, consider a sequence of such chains, $((\Omega_n,P_n,\pi_n): n \in \N)$,
each with its corresponding
worst-distance from stationarity $d_n(t)$, its mixing-time $t_{\mathrm{mix}}^{(n)}$,
etc..
Loosely speaking, the (total variation) \emph{\textbf{cutoff phenomenon}}
 occurs when over a negligible period of time, known as the \emph{\textbf{cutoff
window}}, the (worst-case) total variation distance (of a certain finite
Markov chain from its stationary distribution) drops abruptly from a value
close to 1 to near $0$. In other words, one should run the $n$-th chain until time $(1-o(1))\mixn $  for it to even slightly mix in total variation, whereas running it any further after time $(1+o(1))\mixn $  is essentially redundant.
Formally, we say that a sequence of chains exhibits a \emph{\textbf{cutoff}}
if the
following
sharp transition in its convergence to stationarity occurs:
$$\lim_{n \to \infty}t_{\mathrm{mix}}^{(n)}(\epsilon)/t_{\mathrm{mix}}^{(n)}(1-\epsilon)=1,
\text{ for every }0<\epsilon <1.$$
Similarly, for a sequence of initial distributions $\mu_n \in \PP(\Omega_n)$, we say that a sequence of chains exhibits a $\mu_n$-\emph{\textbf{cutoff}}
if
$$\lim_{n \to \infty}t_{\mathrm{mix},\mu_n}^{(n)}(\epsilon)/t_{\mathrm{mix},\mu_n}^{(n)}(1-\epsilon)=1,
\text{ for every }0<\epsilon <1.$$
We say that the sequence has a \emph{\textbf{cutoff window}} (resp.~$\mu_n$-cutoff window) $w_n$, if  $w_n=o(t_{\mathrm{mix}}^{(n)})$ (resp.~$w_n=o(t_{\mathrm{mix},\mu_{n}}^{(n)})$)
and for any $\epsilon \in (0,1)$ there exists $C_{\epsilon}>0$ such that
for all $n$ 
\begin{equation*}
\label{eq-cutoff-def}
 t_{\mathrm{mix}}^{(n)}(\epsilon)-t_{\mathrm{mix}}^{(n)}(1-\epsilon)  \leq C_{\epsilon}
w_n \quad \text{ (resp.} \quad t_{\mathrm{mix},\mu_n}^{(n)}(\epsilon)-t_{\mathrm{mix},\mu_n}^{(n)}(1-\epsilon)
 \leq C_{\epsilon}
w_n).   
\end{equation*}
We say that a family of chains satisfies the \emph{\textbf{product condition}}
 if $\lambda^{(n)}\mixn \to \infty$ as $n \to \infty$
(or equivalently, $t_{\mathrm{rel}}^{(n)}=o(\mixn)$).
The following well-known fact follows easily from the first inequality
in (\ref{eq: t_relintro}) (see e.g.~\cite{levin2009markov}, Proposition 18.4).
\begin{fact}
\label{fact: cutoffandtrel}
If a sequence of irreducible reversible chains exhibits a cutoff, then $t_{\mathrm{rel}}^{(n)}=o(t_{\mathrm{mix}}^{(n)})$.
\end{fact}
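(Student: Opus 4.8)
The plan is to establish the equivalent bound $t_{\h}(\epsilon)\le t_{\h}(1-\epsilon)+M$ with $M:=C\rel|\log\epsilon|/\epsilon$, for a suitable absolute constant $C$. Fix $\epsilon\in(0,1/2)$ and let $(x,A)$ attain the maximum defining $t_{\h}(\epsilon)$, so that $\pi(A)\ge\epsilon$ and $t_{\h}(\epsilon)=\E_x[T_A]$; write $f(y):=\E_y[T_A]$. The whole argument will reduce to producing a set $B$ with $A\subseteq B$, $\pi(B)\ge1-\epsilon$ and $\sup_{y\in B}f(y)\le M$. Given such a $B$, the fact that $A\subseteq B$ forces $T_B\le T_A$, so the strong Markov property applied at $T_B$ yields
\[
t_{\h}(\epsilon)=\E_x[T_A]=\E_x[T_B]+\E_x\!\big[\E_{X_{T_B}}[T_A]\big]\le t_{\h}(1-\epsilon)+M ,
\]
the first summand being $\le t_{\h}(1-\epsilon)$ because $\pi(B)\ge1-\epsilon$, the second being $\le M$ because $X_{T_B}\in B$.

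To build $B$ I will take a super-level set of $f$, so the remaining task is the tail estimate $\pi\{f>M\}\le\epsilon$, obtained in three steps. (i) Let $\lambda_A>0$ be the exponential $L^2(\pi)$ decay rate of the sub-Markov semigroup $(P^A_t)$ of the chain killed on hitting $A$ (equivalently, the bottom Dirichlet eigenvalue on $\Omega\setminus A$). A one-line variational estimate — lower-bounding the Dirichlet form of any function vanishing on $A$ by $\rel^{-1}$ times its variance, and bounding the square of its $\pi$-mean above by $\pi(A^c)$ times its squared $L^2(\pi)$-norm via Cauchy--Schwarz — gives $\lambda_A\ge\pi(A)/\rel\ge\epsilon/\rel$. (ii) By self-adjointness of $(P^A_t)$ on $L^2(\pi)$ and the contraction bound $\|P^A_t\|_{L^2(\pi)\to L^2(\pi)}=e^{-\lambda_A t}$ applied to the constant function $1$, one has $\h_\pi[T_A>t]\le e^{-\lambda_A t}$, so $T_A$ started from $\pi$ is stochastically dominated by an $\mathrm{Exp}(\lambda_A)$ variable; hence $\E_\pi[T_A^k]\le k!\,\lambda_A^{-k}$ for all $k\ge1$, and by Jensen's inequality (convexity of $t\mapsto t^k$), $\E_\pi[f^k]=\sum_y\pi(y)\big(\E_y[T_A]\big)^k\le\E_\pi[T_A^k]\le k!\,\lambda_A^{-k}$. (iii) Since $\lambda_A M\ge\epsilon M/\rel=C|\log\epsilon|$, Markov's inequality for $f^k$ with $k:=\lceil|\log\epsilon|\rceil$ gives $\pi\{f>M\}\le k!\,(\lambda_A M)^{-k}\le k!\,(C|\log\epsilon|)^{-k}$, which is $\le\epsilon$ once $C$ is a large enough absolute constant (Stirling). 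Then $B:=\{y:f(y)\le M\}$ satisfies $\pi(B)\ge1-\epsilon$, and $A\subseteq B$ because $f\equiv0$ on $A$; this is exactly the $B$ needed above, and the theorem follows with the constant $C$ from step (iii).

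The genuinely delicate point is the exponential — as opposed to merely polynomial — tail of $f$ under $\pi$. A bare application of Markov's inequality to $f$, using only $\E_\pi f=\E_\pi[T_A]\le\lambda_A^{-1}\le\rel/\epsilon$, would give $\pi\{f>M\}\le\rel/(\epsilon M)$ and hence force $M\asymp\rel/\epsilon^2$, yielding only the weaker conclusion $\epsilon^2\big(t_{\h}(\epsilon)-t_{\h}(1-\epsilon)\big)\le C\rel$. The improvement to the stated $|\log\epsilon|/\epsilon$ rate is precisely the passage to all moments of $f$, made possible by the stochastic domination of $T_A$ (from $\pi$) by an exponential of rate $\lambda_A\ge\epsilon/\rel$; establishing that domination — through $L^2(\pi)$-contractivity of the killed semigroup together with the spectral-gap bound $\lambda_A\ge\pi(A)/\rel$ — is the one truly spectral ingredient and the step I would carry out with the most care. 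The strong Markov decomposition in the first paragraph and the Stirling optimisation of $k$ in step (iii) are routine.
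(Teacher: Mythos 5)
There is a genuine gap: your proposal does not address the statement at hand. Fact \ref{fact: cutoffandtrel} asserts that \emph{cutoff implies} $\reln=o(\mixn)$, i.e.\ it is an implication whose hypothesis is the cutoff property and whose conclusion compares the relaxation time with the mixing time. What you prove instead is an unconditional hitting-time regularity estimate, $t_{\h}(\epsilon)\le t_{\h}(1-\epsilon)+C\rel|\log\epsilon|/\epsilon$; this is (a $|\log \epsilon|$-weaker form of) Proposition \ref{cor: hittingtimesinequality} and the last claim of the abstract, not Fact \ref{fact: cutoffandtrel}. Your opening claim that the two are ``equivalent'' is false and never justified: your inequality bounds a hitting-time \emph{difference from above} by a multiple of $\rel$, so it cannot be turned around to show that $\rel$ is small compared to $\mix$; moreover the cutoff hypothesis is never used anywhere in your argument, and $\mixn$ never appears, so no statement of the form ``cutoff $\Rightarrow\reln=o(\mixn)$'' can come out of it.

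The intended proof is the one the paper points to: by the first inequality in (\ref{eq: t_relintro}), $\reln\log\frac{1}{2\epsilon}\le\mixn(\epsilon)$ for every $0<\epsilon<1/2$, and cutoff gives $\mixn(\epsilon)\le(1+o(1))\,\mixn(1-\epsilon)\le(1+o(1))\,\mixn$ for each fixed $\epsilon\le 1/4$; hence $\limsup_n \reln/\mixn\le 1/\log\frac{1}{2\epsilon}$, and letting $\epsilon\downarrow 0$ yields $\reln=o(\mixn)$. That said, the argument you did write (killed-semigroup spectral bound $\lambda_A\ge\pi(A)/\rel$, exponential domination of $T_A$ under $\h_{\pi}$, moment comparison via Jensen, and the strong Markov decomposition through the super-level set $B$ of $y\mapsto\mathbb{E}_y[T_A]$) is essentially sound as a proof of the hitting-time inequality it targets; it is simply a proof of a different result of the paper.
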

The following mixing parameter, introduced in \cite{cutoff}, shall play a key role in this work. 
\begin{definition}
\label{def: worstinprob}
Let $(\Omega,P,\pi)$ be an irreducible chain. Let $\mu \in 
\PP( \Omega)$, $\delta,\epsilon \in (0,1)$ and $t \geq 0$. We define $p_{\mu}(\delta,t):=\max_{B \subseteq \Omega:\, \pi(B) \geq \delta}\h_{\mu}[T_{B}
> t]$, where $T_B:=\inf\{t:X_t \in B \}$ is the \textbf{\textit{hitting time}} of
the set $B$. Set $p(\delta,t):= \max_{x \in \Omega }p_{x}(\delta,t)$. We define
\begin{equation*}
\begin{split}
\mathrm{hit}_{\delta,\mu}(\epsilon):=\min
\{t:p_{\mu}(\delta,t) \leq \epsilon \} \text{ and } \mathrm{hit}_{\delta}(\epsilon):=\min
\{t:p(\delta,t)\leq \epsilon \}.
\end{split}
\end{equation*}
\end{definition}
   
\begin{definition}
\label{def: worstinprobcutoff}
Let $(\Omega_n,P_n,\pi_n)
$ be a sequence of irreducible chains and let $\alpha \in (0,1)$. We say
that the sequence exhibits a $\mathrm{hit}_{\alpha}
$-cutoff (resp.~$\mathrm{hit}_{\alpha,\mu_n}
$-cutoff), if for every $0<\epsilon <1/4$, 
$$\mathrm{hit}_{\alpha}^{(n)}(\epsilon)-\mathrm{hit}_{\alpha}^{(n)}(1-\epsilon)
=o \left(\mathrm{hit}_{\alpha}^{(n)}(1/4)
\right)$$  $$(\text{respectively,} \quad \mathrm{hit}_{\alpha,\mu_n}^{(n)}(\epsilon)-\mathrm{hit}_{\alpha,\mu_n}^{(n)}(1-\epsilon)
=o \left(\mathrm{hit}_{\alpha,\mu_n}^{(n)}(1/4)
\right) ).
$$
\end{definition}

The main abstract result in \cite{cutoff} (Theorem 3) is the following theorem. 
\begin{atheorem}
\label{thm: psigmacutoffequiv}
Let $(\Omega_n,P_n,\pi_n)$ be a sequence of reversible irreducible
finite Markov chains. The following are equivalent:
\begin{itemize}
\item[(1)] The sequence exhibits a cutoff.  
\item[(2)] The
sequence exhibits a $\mathrm{hit}_{\alpha}$-cutoff for some $\alpha \in (0,1)$  and  $t_{\mathrm{rel}}^{(n)}=o(t_{\mathrm{mix}}^{(n)})
$.
\end{itemize}
\end{atheorem}
\begin{definition}
\label{def: worstinexpectation}
 Let $\mu \in \PP(\Omega)$ and $0 < \alpha < 1 $. Define
$$t_{\mathrm{H,\mu}}(\alpha):=\max_{A \subseteq \Omega :\,\pi(A)
\geq \alpha}\mathbb{E}_{\mu}[T_{A}] \quad \text{and} \quad t_{\mathrm{H}}(\alpha):=\max_x t_{\mathrm{H},x}(\alpha)=\max_{x
\in \Omega,A \subseteq \Omega :\,\pi(A)
\geq \alpha}\mathbb{E}_{x}[T_{A}].$$
\end{definition}
This work was greatly motivated by the results of Peres and Sousi in \cite{peres2011mixing}.
Similar results were obtained independently by Oliviera
\cite{oliveira2012mixing}. Both papers refined previous results of Aldous
\cite{aldous1982some} and of Lov\'asz and Winkler \cite{lovasz1998mixing}.
Their results share the general theme of describing
mixing-times in terms of hitting-times. Their approach relied on the theory
of random times to stationarity combined with a certain ``de-randomization" argument due to Aldous \cite{aldous1982some},
 which shows that for any reversible irreducible finite chain and
any stopping time $T$ (possibly w.r.t.~a larger filtration, allowing some external randomness) such that $X_{T} \sim \pi$, we have $\mix = O(\max_{x \in \Omega}
\mathbb{E}_x[T])$. As a  consequence, they showed that for any $0<\alpha<1/2$
(this was extended to $\alpha=1/2$ in \cite{griffiths2012tight}), there exist
some constants $c_{\alpha},c'_{\alpha}>0$ such that for every reversible irreducible
finite chain
\begin{equation*}
\label{eq: tHmixconnection}
c'_\alpha t_{\mathrm{H}}(\alpha) \leq \mix \leq c_\alpha t_{\mathrm{H}}(\alpha).
\end{equation*}
It is natural to ask whether the more studied mixing parameter $t_{\mathrm{H}}(\alpha)
$ could be used in Theorem \ref{thm: psigmacutoffequiv} instead of the mixing
parameter $\hit_{\alpha}(\cdot)$.

\medskip

The following theorem extends Theorem \ref{thm: psigmacutoffequiv} to arbitrary
starting distributions $\mu_n \in \PP(\Omega_n) $, such that $\rel^{(n)}=o(\mixmun)
$. In addition, it asserts that ``cutoff" w.r.t.~these initial distributions (i.e.~$\mu_n$-cutoff),
is in fact equivalent to concentration of hitting times of sets which are
``worst in expectation" w.r.t.~these initial distributions (in the sense
of Definition \ref{def: worstinexpectation}). 
\begin{maintheorem}
\label{thm: cutoffandsetsworstinexpectation}
Let $(\Omega_n,P_n,\pi_n)$ be a sequence of finite irreducible reversible
chains. Let $\mu_{n} \in
\PP(\Omega_{n})$ be such that $\reln=o(\mixmun(\delta))$, for some $0<\delta
< 1$. Then the following are equivalent:
\begin{itemize}
\item[i)] The sequence exhibits a $\mu_n$-cutoff.
\item[ii)] There exists some $\alpha \in (0,1)$ such that the sequence exhibits
a $\mathrm{hit}_{\alpha,\mu_{n}}$-cutoff.
\item[iii)] There exist some $\alpha \in (0,1)$  and a sequence of
sets $A_n \subseteq \Omega_n$ with $\pi_n(A_n) \geq \alpha$ satisfying that
$\mathbb{E}_{\mu_n}[T_{A_n}]=
t_{\h,\mu_n}^{(n)}(\alpha)$, such that $$\lim_{n \to \infty}\h_{\mu_{n}}[|T_{A_n}-\mixmun|<\epsilon
\mathbb{E}_{\mu_n}[T_{A_n}]]=1, \quad
\text{for every }\epsilon>0. $$
\end{itemize}
\end{maintheorem}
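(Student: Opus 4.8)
The plan is to prove the chain of implications $(iii)\Rightarrow(i)\Rightarrow(ii)\Rightarrow(iii)$, using the machinery of \cite{cutoff} together with the relation \eqref{eq: tHmixconnection} relating $t_{\h}(\alpha)$ and $\mix$. The key translation device throughout is that, since the stopped chain at a set $A_n$ of $\pi_n$-measure at least $\alpha$ is, after a bounded amount of extra time, essentially stationary, hitting a ``worst'' such set is comparable to mixing; one has to make this quantitative and uniform in $\mu_n$, and crucially to track the \emph{windows} and not just the orders of magnitude. The hypothesis $\reln = o(\mixmun(\delta))$ (which, via \eqref{eq: t_relintro}, self-improves to $\reln = o(\mixmun(\epsilon))$ for every fixed $\epsilon$) is what lets us move freely between $\mixmun(\epsilon)$ for different values of $\epsilon$ up to an additive $O(\reln \log(1/\epsilon)) = o(\mixmun)$ error, and similarly between $\hit_{\alpha,\mu_n}^{(n)}(\epsilon)$ for different $\epsilon$; this is the analogue for the $\mu_n$-setting of the role the product condition plays in Theorem \ref{thm: psigmacutoffequiv}.

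The implication $(iii)\Rightarrow(i)$ should be the most direct: if $T_{A_n}$ concentrates around $\mixmun$ relative to its own mean, then since $\mathbb{E}_{\mu_n}[T_{A_n}] = t_{\h,\mu_n}^{(n)}(\alpha)$ is comparable to $\mixmun$ (by a $\mu_n$-version of \eqref{eq: tHmixconnection}, which I expect to be either cited from \cite{peres2011mixing, oliveira2012mixing} in the form needed or proved by the de-randomization argument), concentration relative to the mean is the same as concentration relative to $\mixmun$, i.e. within a window $w_n = o(\mixmun)$. Hitting a set of stationary measure $\ge \alpha$ by time $(1+o(1))\mixmun$ forces $d_{\mu_n}$ to drop below $1-\alpha + o(1)$, hence below any fixed $\epsilon$ after a further $O(\reln\log(1/\epsilon)) = o(\mixmun)$ time by the submultiplicativity/spectral bound; and failing to hit $A_n$ by time $(1-o(1))\mixmun$ with probability bounded away from $0$ forces $d_{\mu_n} \ge$ const there. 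This pins $\mixmun(\epsilon)$ into a window of size $o(\mixmun)$ for every $\epsilon$, which is exactly $\mu_n$-cutoff.

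For $(i)\Rightarrow(ii)$ and $(ii)\Rightarrow(iii)$ I would import the strategy of \cite{cutoff}: Starr's maximal inequality is used to show that, for a reversible chain, $d_{\mu_n}(t)$ is controlled by $p_{\mu_n}(\alpha, t - O(\reln))$ for appropriate $\alpha$, and conversely $p_{\mu_n}(\alpha,t)$ is controlled by $d_{\mu_n}$ at a comparable time; passing to the sequence and using $\reln = o(\mixmun)$, a $\mu_n$-cutoff transfers to a $\hit_{\alpha,\mu_n}$-cutoff and back. For $(ii)\Rightarrow(iii)$ one additionally needs that the ``worst in probability'' set witnessing $p_{\mu_n}(\alpha,\cdot)$ can be replaced by (or compared to) a set that is exactly worst in expectation, $\mathbb{E}_{\mu_n}[T_{A_n}] = t_{\h,\mu_n}^{(n)}(\alpha)$, with concentration of $T_{A_n}$ around $\mixmun$; here one picks $A_n$ maximizing $\mathbb{E}_{\mu_n}[T_{A_n}]$ among $\pi_n(A) \ge \alpha$, notes $\hit_{\alpha,\mu_n}^{(n)}(\epsilon) \le \mathbb{E}_{\mu_n}[T_{A_n}]/\epsilon$ by Markov and $\mathbb{E}_{\mu_n}[T_{A_n}] \le \hit_{\alpha,\mu_n}^{(n)}(\epsilon) + \epsilon\cdot(\text{tail contribution})$ with the tail controlled again by $\reln\log(\cdot)$, so the window for $\hit_{\alpha,\mu_n}$ forces concentration of $T_{A_n}$ around a value which is itself within $o(\mixmun)$ of $\mixmun$.

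I expect the main obstacle to be the careful bookkeeping of \emph{windows} uniformly in $n$ and in the value of $\epsilon$: every comparison above ($t_{\h,\mu_n}$ vs.\ $\mixmun$, $p_{\mu_n}$ vs.\ $d_{\mu_n}$, $\mathbb{E}_{\mu_n}[T_{A_n}]$ vs.\ $\mixmun$, and the replacement of a worst-in-probability set by a worst-in-expectation one) costs an additive term, and the whole argument works only because each such term is $o(\mixmun)$, which in turn hinges on repeatedly invoking $\reln = o(\mixmun(\epsilon))$ after self-improving the hypothesis. A secondary subtlety is that $t_{\h,\mu_n}$ and $t_{\h}$ need not satisfy \eqref{eq: tHmixconnection} with the same constants for a \emph{general} starting measure $\mu_n$ (unlike the worst-case statement), so one must either restrict $\alpha$ to $(0,1/2)$ and use the known $\mu$-dependent bounds or argue directly via the de-randomization step of \cite{peres2011mixing}; I would isolate this as a lemma before running the three implications.
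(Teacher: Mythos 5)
Your high-level plan (cycle through the three conditions, using $\reln=o(\mixmun(\delta))$ to absorb additive $O(\reln\log(1/\epsilon))$ errors) matches the spirit of the paper, but two load-bearing steps are missing or rest on a tool that is not actually available. First, in $(iii)\Rightarrow(i)$ your lower bound on the mixing time is ``failing to hit $A_n$ by time $(1-o(1))\mixmun$ forces $d_{\mu_n}\ge$ const''. The inequality $d_{\mu_n}(t)\ge \pi_n(A_n)-\h_{\mu_n}[T_{A_n}\le t]$ only yields $d_{\mu_n}(t)\ge \alpha-o(1)$, which shows $\mixmun(\alpha/2)\ge(1-o(1))\mixmun$ but not $\mixmun(1-\epsilon)\ge(1-o(1))\mixmun$; $\mu_n$-cutoff requires the distance to stay near $1$, not merely bounded away from $0$. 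The paper closes exactly this gap with Corollary \ref{cor: easydirection}: coupling with a stationary chain and the exponential tail bound (\ref{eq: CM1}) give $\h_{\mu}[T_A>t+s_\epsilon]\le d_\mu(t)+\epsilon/2$ for $\pi(A)\ge 1/2$, whose contrapositive converts ``a half-measure set is unhit w.h.p.\ at time $t+s_\epsilon$'' into $d_\mu(t)>1-\epsilon$ (one first moves from level $\alpha$ to level $1/2$ via Corollary \ref{prop: hitpqinequalities}, at an $O(\reln)$ cost). Some such upgrade is indispensable and your sketch does not contain it.

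Second, you lean on a ``$\mu_n$-version of (\ref{eq: tHmixconnection})'' to be cited from \cite{peres2011mixing,oliveira2012mixing} or re-derived by de-randomization. No such version exists off the shelf: the de-randomization argument is intrinsically a worst-case-over-starting-states statement, and the comparison $t_{\h,\mu}(\alpha)\asymp \mixmu$ is simply false for general $\mu$ (take $\mu=\pi$, so $\mixmu=0$ while $t_{\h,\mu}(\alpha)>0$). The paper never uses (\ref{eq: tHmixconnection}); the comparability it needs is extracted from the cutoff/concentration hypotheses themselves via the Starr-inequality bounds (Proposition \ref{prop: hitmix1/2intro}) together with Lemma \ref{lem: worstinexp}. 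That lemma is also the missing ingredient in your $(ii)\Rightarrow(iii)$ step: a $\mathrm{hit}_{\alpha,\mu_n}$-cutoff controls the set that is worst \emph{in probability}, and a set $A_n$ that is worst \emph{in expectation} could a priori be hit early with substantial probability while owing its large mean to a heavy upper tail — your Markov-inequality bookkeeping controls means, not the lower tail of $T_{A_n}$. Lemma \ref{lem: worstinexp} rules this out by showing that if $A_n$ is worst in expectation then for every $B$ with $\pi_n(B)\ge 1-\epsilon/2$ one has $\h_{\mu_n}[T_B-T_{A_n}\ge r\rho]\le r^{-1}$ with $\rho=O(\reln)$ (because $\mathbb{E}_{\mu_n}[T_{I\cap B}]\le\mathbb{E}_{\mu_n}[T_{A_n}]$ and, by Lemma \ref{lem: frombigtobigger}, $\mathbb{E}_z[T_{A_n}]=O(\reln)$ for $z$ in a large set $I$); early hitting of $A_n$ would then force early hitting of the worst-in-probability set, contradicting the $\mathrm{hit}$-cutoff. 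Without an argument of this type the implication $(ii)\Rightarrow(iii)$ does not go through.
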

\begin{remark}
It was shown in \cite{cutoff} that the occurrence of  $\mathrm{hit}_{\alpha}$-cutoff  for some $\alpha \in (0,1/2]$ implies that the product condition holds. Example 8.2 in \cite{cutoff} demonstrates that this cannot be improved and that $\mathrm{hit}_{\alpha}$-cutoff  for some $\alpha \in (1/2,1)$ need not imply cutoff. A small variation of that example can be used to show that in general   $\mathrm{hit}_{\alpha,\mu_n}$-cutoff (even for small $\alpha$) need not imply $\mu_n$-cutoff. Thus the assumption that $\reln=o(\mixmun(\delta))$ for some $0<\delta
< 1$, in Theorem \ref{thm: cutoffandsetsworstinexpectation} cannot be removed.
\end{remark}
We say that a Markov chain is \emph{transitive} if for every $x,y \in \Omega$ there exists a bijection $\phi:\Omega \to \Omega$ such that (1) $\phi(x)=y$ and (2) for all $a,b \in \Omega$ we have that $P(a,b)=P(\phi(a),\phi(b))$.
\begin{corollary}
\label{cor: transitive}
Let $(\Omega_n,P_n,\pi_n)$ be a sequence of finite irreducible reversible
transitive
chains. Then the following are equivalent:
\begin{itemize}
\item[i)] The sequence exhibits a cutoff.

\item[ii)] The sequence satisfies the product condition, and for some sequence $x_n \in \Omega_n $ and some  $0<\alpha <1$, there exists a
sequence of
sets $A_n \subseteq \Omega_n$ with $\pi_n(A_n) \geq \alpha$ satisfying that
$\mathbb{E}_{x_n}[T_{A_n}]=
t_{\h,x_n}^{(n)}(\alpha)$, such that $$\lim_{n \to \infty}\h_{x_{n}}[|T_{A_n}-\mix^{(n)}|<\epsilon
\mathbb{E}_{x_n}[T_{A_n}]]=1,
\text{ for every }\epsilon>0.$$
\end{itemize}
\end{corollary}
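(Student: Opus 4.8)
The plan is to obtain Corollary \ref{cor: transitive} as an essentially immediate consequence of Theorem \ref{thm: cutoffandsetsworstinexpectation}, applied with the degenerate initial distributions $\mu_n := \delta_{x_n}$, once one observes that for transitive chains the distance to stationarity does not depend on the starting state.

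First I would isolate the relevant transitivity fact. If $(\Omega,P,\pi)$ is a finite irreducible transitive chain then $\pi$ is the uniform distribution, and for any $x,y \in \Omega$ a chain-automorphism $\phi$ with $\phi(x)=y$ transports $H_t(x,\cdot)$ to $H_t(y,\cdot)$ while fixing $\pi$; hence $d_x(t)=d_y(t)$ for all $t$, and therefore $d_x(t)=d(t)$ for every $x$. Consequently $t_{\mathrm{mix},x}(\epsilon)=t_{\mathrm{mix}}(\epsilon)$ for every $\epsilon$ and every $x$, so (taking $\mu_n=\delta_{x_n}$) we get $\mixmun(\epsilon)=\mixn(\epsilon)$ for all $\epsilon$ and every sequence $x_n \in \Omega_n$; in particular $\mixmun=\mixn=\mix^{(n)}$, and the sequence exhibits a $\delta_{x_n}$-cutoff if and only if it exhibits a cutoff.

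Next I would verify that, under the product condition, the hypothesis $\reln=o(\mixmun(\delta))$ of Theorem \ref{thm: cutoffandsetsworstinexpectation} holds with $\delta=1/4$: since $t\mapsto d_{\mu}(t)$ is non-increasing we have $\mixmun(\delta')\ge\mixmun(1/4)$ for $\delta'\le 1/4$, and by the transitivity identity $\mixmun(1/4)=\mixn$, so $\reln=o(\mixn)$ forces $\reln=o(\mixmun(1/4))$. With this in place, the two implications follow by translation. For $(i)\Rightarrow(ii)$: cutoff implies the product condition by Fact \ref{fact: cutoffandtrel}, and implies $\delta_{x_n}$-cutoff for every choice of $x_n$ by the transitivity identity; fixing any such $x_n$ and applying the implication (i)$\Rightarrow$(iii) of Theorem \ref{thm: cutoffandsetsworstinexpectation} produces $\alpha\in(0,1)$ and sets $A_n$ with $\pi_n(A_n)\ge\alpha$, $\mathbb{E}_{x_n}[T_{A_n}]=t_{\h,x_n}^{(n)}(\alpha)$, and $\lim_{n\to\infty}\h_{x_n}[|T_{A_n}-\mixmun|<\epsilon\mathbb{E}_{x_n}[T_{A_n}]]=1$; since $\mixmun=\mix^{(n)}$ this is exactly condition (ii). For $(ii)\Rightarrow(i)$: the product condition again supplies the hypothesis of Theorem \ref{thm: cutoffandsetsworstinexpectation} for $\mu_n=\delta_{x_n}$, and, because $\mixmun=\mix^{(n)}$, condition (ii) is verbatim statement (iii) of that theorem, which yields a $\delta_{x_n}$-cutoff and hence, by the transitivity identity, a cutoff.

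I do not anticipate a genuine obstacle here; the one point deserving care is the bookkeeping that transfers the product condition, stated at level $1/4$, to the level $\delta$ required to invoke Theorem \ref{thm: cutoffandsetsworstinexpectation}. This is handled by the monotonicity of $d_{\mu}(\cdot)$ together with the equality $d_{x_n}(\cdot)=d_n(\cdot)$; beyond that, every step is a direct substitution through $\mixmun(\epsilon)=\mixn(\epsilon)$.
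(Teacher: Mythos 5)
Your proof is correct and follows exactly the route the paper intends: the paper gives no separate proof of Corollary \ref{cor: transitive}, treating it as an immediate consequence of Theorem \ref{thm: cutoffandsetsworstinexpectation} applied with $\mu_n=\delta_{x_n}$, using that transitivity forces $d_{x}(t)=d(t)$ for every $x$ (so $\mu_n$-cutoff coincides with cutoff and the product condition supplies the hypothesis $\reln=o(\mixmun(\delta))$). Your write-up simply makes explicit the bookkeeping the paper leaves implicit.
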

The following proposition asserts that in general cutoff (as opposed to cutoff
w.r.t.~some sequence of initial distributions) cannot be characterized in
terms of the parameter $t_{\mathrm{H}}(\cdot) $.
\begin{proposition}
\label{rem: worstinexp}
There exists a sequence $(\Omega_n,P_n,\pi_n)$ of finite irreducible reversible
chains satisfying the product condition such that the following holds:
\begin{itemize}
\item
There exist $A_n
\subseteq \Omega_n$ and $x_n \in \Omega_n$  such that   $\pi_n(A_n) \geq 1/2
$ and $\mathbb{E}_{x_n}[T_{A_n}]=t_{\mathrm{H}}^{(n)}(1/2)
$.
\item
The distribution of the hitting times of $A_n$ are concentrated w.r.t.~the
initial states
$x_n$.
\item The sequence does not exhibit a $\hit_{1/2}$-cutoff.
\item
The sequence does not exhibit a cutoff.
\end{itemize} 
\end{proposition}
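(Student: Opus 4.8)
The plan is to build a chain that, from the specific worst-in-expectation starting state $x_n$, mixes with a cutoff (so that hitting times of the worst set $A_n$ concentrate, as required by the second bullet), but which, from a \emph{different} family of starting states, mixes on a genuinely different time-scale, thereby destroying cutoff in the worst-case sense. The natural construction is a disjoint union of two (or more) chains glued at a single vertex, or more cleanly, a chain on $\Omega_n = \Omega_n^{(1)} \sqcup \Omega_n^{(2)}$ where the stationary mass is split so that $\pi_n(\Omega_n^{(1)})$ is bounded away from $0$ and $1$, each block run at its own speed. If block $1$ has $t_{\mathrm{mix}} \asymp a_n$ and block $2$ has $t_{\mathrm{mix}} \asymp b_n$ with $a_n/b_n \to \infty$, then the worst-case distance $d_n(t)$ sees a plateau and there is no cutoff. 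The content is to arrange the parameters so that (a) the product condition $\reln = o(\mixn)$ still holds, (b) the worst-in-expectation set $A_n$ at level $\alpha = 1/2$, together with its worst starting state $x_n$, lives in the block where hitting times \emph{do} concentrate, and (c) $\E_{x_n}[T_{A_n}] = t_{\mathrm H}^{(n)}(1/2)$ exactly, i.e. this is truly the global maximizer.

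Here is the order of steps I would carry out. First, fix the building blocks: for block $1$ take a chain with cutoff and with a known concentrated hitting-time structure for sets of stationary measure $\ge 1/2$ — for instance a biased random walk / birth–death chain on a path of length $\ell_n$, or simply a two-state-like "slow bottleneck" where $T_{A_n}$ is essentially a sum of many independent geometric/exponential steps and hence concentrates by the law of large numbers (this is exactly the mechanism behind $\hit_{\alpha}$-cutoff in \cite{cutoff}). For block $2$ take something that mixes much faster but has small stationary mass, or more robustly, a second copy of a path-type chain on a much shorter scale $b_n = o(a_n)$, with $\pi_n$ assigning it mass, say, $1/2$ as well. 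Second, choose the holding rates and the mass split so that: the spectral gap of the whole chain is $\asymp 1/a_n$ times a slowly-growing factor — actually one wants $\reln/\mixn \to 0$, so pick block $1$ itself to satisfy the product condition internally (a path with a drift does), and make sure block $2$'s contribution to the gap is negligible. Third, verify the worst-in-expectation claim: because $a_n \gg b_n$, sets of measure $\ge 1/2$ that sit inside block $1$ (or that force a traversal of block $1$'s bottleneck) have the largest expected hitting time from their worst state, so $t_{\mathrm H}^{(n)}(1/2)$ is realized by a set $A_n$ and a state $x_n$ both in block $1$; then invoke the concentration already built into block $1$ to get the second bullet. Fourth, exhibit the failure of cutoff: from the worst state of block $2$ the chain needs time $\asymp a_n$ only to "hear about" the slow block through the shared geometry — no wait, better: from a state of block $1$ that is far from $A_n$ you already need $\asymp a_n$, while there exist states (in block $2$, or appropriately placed) from which $d_n(t)$ is still bounded away from $0$ at times much larger than needed — the cleanest route is to show $d_n((1-\epsilon)a_n)$ is bounded below and $d_n(c\cdot a_n)$ for small $c$ involves a second relevant scale; concretely a disjoint-union chain has $\mixn(\epsilon) = \max$ of the two sub-mixing times for small $\epsilon$ but for $\epsilon$ near $1$ it can be the \emph{smaller} scale, which kills the ratio $\mixn(\epsilon)/\mixn(1-\epsilon)$. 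I would engineer block $2$ to itself lack cutoff, or to have $\mixn(1-\epsilon) \asymp b_n \ll a_n \asymp \mixn(\epsilon)$, whichever makes the union-chain ratio blow up.

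The main obstacle, I expect, is the simultaneous bookkeeping in step three: making $A_n$ with $\pi_n(A_n)\ge 1/2$ the \emph{exact} argmax of $\E_{x_n}[T_{A_n}]$ over \emph{all} sets of measure $\ge 1/2$ and \emph{all} starting states, while also keeping the hitting time from $x_n$ concentrated, \emph{and} keeping the whole chain free of cutoff. These pull in tension: the feature that breaks cutoff (a second, different time-scale) could a priori create a different set, or a different starting state, that is worst in expectation and whose hitting time is \emph{not} concentrated — which is precisely what Theorem \ref{thm: cutoffandsetsworstinexpectation} would forbid if it applied, so one must be careful that the hypothesis $\reln = o(\mixmun(\delta))$ of that theorem fails for the relevant $\mu_n = \delta_{x_n}$, or rather that the relevant "worst" pair we exhibit is not the one the (non-)cutoff is sensitive to. I would resolve this by a clean disjoint-union computation: write every hitting time and every $\E_x[T_A]$ as an explicit expression in the two blocks' parameters, optimize by hand to locate the argmax, and read off concentration from the block-$1$ structure; the counter-example's "surprise" is exactly that the global worst set can be blind to the slow mode that governs worst-case mixing.
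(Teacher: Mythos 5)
Your plan has a genuine gap at its core: the ``two macroscopic blocks glued at a vertex, running at speeds $b_n\ll a_n$'' architecture does not actually destroy cutoff while keeping the product condition, and the tension is not mere bookkeeping. Recall $d_n(t)=\max_x d_{x}(t)$. If every individual curve $t\mapsto d_{x}(t)$ drops abruptly at its own scale (at $\approx a_n$ for $x$ deep in the slow block, at $\approx b_n$ for $x$ in the fast block), then the maximum is $\approx 1$ up to $(1-o(1))a_n$ and $\approx 0$ after $(1+o(1))a_n$, i.e.\ the glued chain \emph{does} exhibit cutoff at $a_n$. The mere coexistence of two time scales attached to \emph{different} starting states is harmless; to kill cutoff you need a single starting state $x$ whose own curve $d_{x}(t)$ has a plateau at an intermediate level over a window of order $\mixn$. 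In your architecture the only available source of such a plateau is slow redistribution of mass between the two blocks; but since each block carries stationary mass bounded away from $0$ and $1$, the block indicator is (up to normalization) an admissible test function for the spectral gap, so a redistribution time comparable to $\mixn$ forces $\reln\asymp\mixn$ and the product condition fails (consistent with Fact \ref{fact: cutoffandtrel}). If instead the exchange is fast, the plateau disappears and cutoff returns. Your escape hatch --- ``engineer block $2$ to itself lack cutoff'' --- is circular: it presupposes the object you are constructing.

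The paper's Example \ref{ex: aldous} resolves exactly this by abandoning the two-macroscopic-components picture. There is a single heavy target state $z$ (with $\pi_n(z)\ge 1/2$), reached from a junction $v$ via two \emph{parallel} branches $B$ and $C$ of equal length but very different holding rates, so traversal takes $\approx 6n$ on $B$ and $\approx 300n$ on $C$. Because all the weight decays geometrically toward $z$, the bottleneck ratio is bounded below and $\reln=O(1)$, so the product condition holds with no bottleneck anywhere. The plateau in $d_x(t)$ comes from the \emph{branch-choice randomness}: from the top state, $T_z$ is a fair mixture of two values concentrated near $\approx 6n$ and $\approx 300n$, so $d_x(t)$ sits near $1/2$ on the window $(cn,300n)$ and there is no cutoff. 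Meanwhile the worst-in-expectation pair is $A_n=\{z\}$ with $x_n=c_{j_n}$ sitting deep inside the slow branch; such a state is already committed (with probability $1-o(1)$) to the slow route, so $T_{z}$ from $x_n$ \emph{is} concentrated, around $\approx 300 j_n$. Your closing intuition --- that the worst pair is ``blind'' to what governs worst-case mixing --- is the right moral, but note the precise form it takes: the worst state is not in a different component from the slow mode; it is \emph{inside} the slow branch, and what it is blind to is the bimodality created at the junction. If you want to salvage your write-up, replace ``disjoint union of blocks'' by ``two parallel routes to one heavy state, with a coin flip at the junction,'' and the rest of your outline (locating the argmax by explicit birth-and-death computations, reading off concentration, verifying $\reln=O(1)$ via Cheeger) matches what the paper does.
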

In Example
\ref{ex: aldous} we construct a sequence of chains which exhibits the behavior described
in Proposition \ref{rem: worstinexp}. 

\begin{remark}
It was shown in \cite{chen2013comparison} that a sequence of finite continuous-time Markov chains exhibits a cutoff iff $t_{L}^{(n)}(\epsilon)-t_{L}^{(n)}(1-\epsilon)=o(t_{L}^{(n)}(1/4)) $, where $t_{L}^{(n)}(\eps)$ is the $\eps$-mixing-time of the associated lazy chain. They also showed that the same holds for a sequence of fixed initial distributions. Hence in part (i) of Theorem \ref{thm: cutoffandsetsworstinexpectation} and of Corollary \ref{cor: transitive} we could have considered the lazy version of the chain, rather than its continuous-time version.
\end{remark}
Recall that the relaxation-time $\rel$ of $P$ is defined to be the inverse of the smallest non-zero eigenvalue of $I-\half(P+P^*)$. The main ingredient in the proof of Theorem \ref{thm: cutoffandsetsworstinexpectation} is the following proposition.\begin{proposition}
\label{prop: hitmix1/2intro}
Let $(\Omega,P,\pi)$ be a finite irreducible reversible Markov chain. Let
$\mu \in \PP(\Omega)$. Let $0 < \alpha, \eps < 1$ and let $0<\delta \leq \min \{ \eps,1-\epsilon,1-\alpha\} $. Then
\begin{equation}
\label{eq:quan1}
 {\rm hit}_{\alpha,\mu}(\epsilon+\delta)- \alpha^{-1} \log \left( \frac{ 1-\alpha}{\delta} \right)t_{\rm rel} \leq 
 \mixmu(\epsilon) \leq \mathrm{hit}_{\alpha,\mu}(\epsilon-\delta) +\frac{3}{2} \rel |\log
\delta|.
\end{equation}
The same holds when $\mu$ is omitted from the inequality. Moreover, even if the chain is non-reversible, we still have that
\begin{equation}
\label{eq:quan2}
 {\rm hit}_{\alpha,\mu}(\epsilon+\delta)-   \alpha^{-1} \log ( \frac{ 1-\alpha}{\delta})t_{\rm rel} \leq 
 \mixmu(\epsilon) \leq \mathrm{hit}_{1-\delta,\mu}(\epsilon-\delta) + \log
(2/\delta^{3}) \rel \log ( \rel \vee e).
\end{equation}  
\end{proposition}

\begin{conjecture}
\label{con:nonrev}
Let $t_*:=\inf\{t: \mathbf{Var}_{\pi}H_t f \leq e^{-2} \mathbf{Var}_{\pi}f \text{ for all }f \in \R^{\Omega} \}$, where for $g \in \R^{\Omega}$ we define $\mathbf{Var}_{\pi}g:=\|g-\mathbb{E}_\pi g\|_2^2 $, $\mathbb{E}_\pi g:= \sum_x \pi(x) g(x) $, $\|g\|_2^2:=\mathbb{E}_\pi g^2 $ and $H_tg(x):=\sum_y H_t(x,y)g(y)=\mathbb{E}_x g(X_t)$. 
 Then there exist some constants $c_{\eps},C_{\delta}>0$ such that for every irreducible finite Markov chain, for every $\eps \in (0,1) $ and every $\del \leq \min \{\eps,1-\eps\}$ we have that
\[ \max \{c_{\eps}t_*,\hit_{1-\del}(\eps + \delta) \} \leq \mix(\eps) \leq \hit_{1-\del}(\eps - \delta) +C_{\delta} t_*.  \]
\end{conjecture}
For more details regarding this conjecture and how one might prove it, see Conjecture \ref{con:nonrevmax} and Remark \ref{rem:weirdmaxinequality}. 

The advantage in the definition of $\rel$ as the relaxation-time of the additive symmetrization is that in this fashion it still admits an extremal characterization and thus in applications can be bounded from above via various standard comparison techniques. The disadvantage is that without reversibility, in general it no longer provides a lower bound on $\mix$ nor on the rate of exponential decay of variances. To see this consider the following example due to Chen \cite{chenphd} (similar examples can be found at \cite[Section 6]{tetali} and \cite[Example 9.26]{aldous2000reversible}). Let $\Omega =\{0,1\}^n$ and $P((x_{1},x_2,\ldots ,x_n),(x_2,x_3,\ldots,x_n,\eta))=\half$ for all $(x_{1},x_2,\ldots ,x_n) \in \Omega$ and $\eta \in \{0,1\}$. Observe that after $n$ steps the discrete-time chain attains (exactly) the uniform distribution on $\Omega$, while the relaxation-time of the additive-symmetrization is governed by that of simple random walk on the $n$-cycle and hence is of order $n^2$ (e.g.\ \cite[Section 12.3]{levin2009markov})  

We now comment about $t_*$ being a natural alternative extension of $\rel$ to the non-reversible setup, which is meant to eliminate the existence of such examples. 
\begin{remark}
It is possible to replace in all of our results $\rel$ by $t_*$ from the previous
conjecture. Let $H_t^*(x,y):=\frac{\pi(y)}{\pi(x)}H_t(x,y)$. Since $H_t^*$ is the dual operator of $H_t$ w.r.t.\ the inner-product on $\R^{\Omega} $ given by $\langle f,g \rangle_{\pi}:=\mathbb{E}_{\pi}[fg] $ we have that
\[t_{*}=\inf\{t: \langle H_t^* H_t f, f \rangle_{\pi} \leq  e^{-2}  \text{ for all }f \in \R^{\Omega} \text{ such that }\mathbb{E}_{\pi}f=0 \text{ and } \mathbb{E}_{\pi}f^2=1 \} \]
 Observe that under reversibility we have that $t_*=\rel$, while in general $t_* \leq \rel $. Moreover, observe that for all $k \in \N$ we have that  $\mathbf{Var}_{\pi}H_{kt_*} f \leq e^{-2k} \mathbf{Var}_{\pi}f$.

Since $H_t^* H_t $ is self-adjoint w.r.t.\ $\langle \bullet,\bullet \rangle_{\pi} $ it follows from the Courant-–Fischer–-Weyl min-max principle (and continuity w.r.t.\ $t$) that there exists some non-constant $f \in \R^{\Omega} $ such that $H_{t_{*}}^*H_{t_{*}}f=e^{-2}f $ and $\mathbb{E}_{\pi}f=0$.  It follows from general considerations (cf.\ \cite[the proof of Theorem 12.4]{levin2009markov}) that if $|f(x)|=\max_{y \in \Omega}|f(y)| $ then
\[\|H_{t_{*}}^*H_{t_{*}} (x,\bullet)-\pi(\bullet)  \|_{\TV} =\sum_y \half |H_{t_{*}}^*H_{t_{*}} (x,y) - \pi(y) | \geq \frac{1}{2e^{2}}. \]
It follows that $\|\h_{\mu}^{t_*}-\pi\|_{\TV} \geq \frac{1}{2e^2}$, where $\mu(z):=H_{t_{*}}^*(x,z)$, and so $\mix(\frac{1}{2e^{2}}) \geq t_*$. 
\end{remark}
\begin{remark}
Equation \eqref{eq:quan1} is essentially Proposition 1.7 from \cite{cutoff}, written (and proved) in a neater manner. Equation \eqref{eq:quan2} is new. It is not hard to extend the proof of Theorem \ref{thm: cutoffandsetsworstinexpectation} to the non-reversible setup if one replaces the assumption that $\reln=o(\mixmun(\delta)) $ with the assumption that $\reln \log(\reln \vee e ) =o(\mixmun(\delta)) $. Similarly, under the assumption that $\reln \log(\reln \vee e ) =o(\mix(\delta)) $, even without reversibility we have that cutoff is equivalent to the occurrence of $\hit_{\alpha}$-cutoff for some (and in fact, for all) $\alpha \in (0,1)$.
\end{remark}
In \cite{griffiths2012tight} the following general inequality was proved (without a reversibility assumption).

\begin{atheorem}
\label{thm: griffiths}
Fix $0<\epsilon< 1/2$. For every irreducible finite Markov chain
\[\epsilon t_{\mathrm{H}}(\epsilon) \leq t_{\mathrm{H}}(1/2).\]
\end{atheorem}
The following proposition offers an upper bound on $t_{\mathrm{H}}(\epsilon) - t_{\mathrm{H}}(1-\epsilon)$, which in some cases is considerably better than the bound in Theorem \ref{thm: griffiths} (in particular, this is the case under reversibility when the product condition holds). 
\begin{proposition}
\label{cor: hittingtimesinequality}
There exists an absolute constant $C>0$ such that for every finite irreducible chain  $(\Omega,P,\pi)$,
\begin{equation}
\label{eq: t_Heps}
t_{\mathrm{H,\mu}}(\epsilon) - t_{\mathrm{H,\mu}}(1-\epsilon) \leq C \eps^{-1}  \rel  , \text{ for every } 0<\eps <1/2 \text{ and } \mu \in \PP(\Omega).
 \end{equation}
 The same holds when $\mu$ is omitted from the l.h.s..
\end{proposition}
\begin{remark}
In the non-reversible setup it is possible that $t_{\mathrm{H}}(\epsilon ) \leq C \sqrt{\rel}$ for all $\epsilon \in (0,1)$. To see this, consider a walk on the $n$-cycle with a fixed clockwise bias. Here $\max_{x,y}\mathbb{E}_x[T_y] \leq Cn $, while the additive symmetrization is simple random walk on the $n$-cycle and hence  $\rel = \Theta(n^2)$ (e.g.\ \cite[Section 12.3]{levin2009markov}). Conversely, under reversibility we have that $t_{\mathrm{H}}(1/2) \geq \rel $ (e.g.~\cite[Lemma 4.39]{aldous2000reversible}) and thus \eqref{eq: t_Heps} implies that $t_{\mathrm{H}}(\epsilon) - t_{\mathrm{H}}(1-\epsilon) \leq C \eps^{-1}  t_{\mathrm{H}}(1/2)  $, recovering Theorem \ref{thm: griffiths}, up to a constant. 
\end{remark}  
%
%
\section{Starr's Maximal inequality}
\label{s:trelimplications}
In this section we present the machinery that will be utilized in the proof of the main results. The most important tool we shall utilize is Starr's
$L^2$ maximal inequality (Theorem
\ref{thm: maxergodic}). We start with a few basic definitions and facts. We denote $\Z_{+}:=\{n \in \Z: n \geq 0 \}$ and $\R_+:=\{t \in \R:t \geq 0 \}$.

\begin{definition}
\label{def: L_p distance of measures}
Let $(\Omega,P,\pi)$ be a finite chain. For $f \in \R^{\Omega}$, let $\mathbb{E}_{\pi}[f]:=\sum_{x \in \Omega}\pi(x)f(x)$ and $\Var_{\pi}f:=\mathbb{E}_{\pi}[(f-\mathbb{E}_{\pi}f)^{2}]$. The inner-product $\langle \cdot,\cdot \rangle_{\pi}$ and $L^{p} $ norms ($1 \leq p < \infty$) are
$$\langle f,g\rangle_{\pi}:=\mathbb{E}_{\pi}[fg] \text{ and } \|f \|_p:=\left( \mathbb{E}_{\pi}[|f|^{p}]\right)^{1/p}. $$
We identify $P^{k}$ and $H_t$ with the operators $P^{k},H_t:L^2(\R^{\Omega},\pi)
\to L^2(\R^{\Omega},\pi)$, defined by $P^{k} f(x):=\sum_{y \in \Omega}P^{k}(x,y)f(y) $  and
$H_t f(x):=\sum_{y \in \Omega}H_{t}(x,y)f(y) =\mathbb{E}_{x}[f(X_t)]$. 
\end{definition}
Recall that if $P$ is reversible, then also $H_t$ is self-adjoint w.r.t.~$\langle \cdot,\cdot \rangle_{\pi}$.
 Recall that the spectral gap can be characterized as $\la=\inf \{\frac{\langle (I-P) f,f\rangle_{\pi}}{\Var_{\pi}f}:f \in \R^\gO \text{ non-constant} \}$ and that $\frac{d}{dt}\Var_{\pi}H_tf= -\langle (I-P)H_{t} f,H_{t}f\rangle_{\pi} \leq -2 \la \Var_{\pi}H_tf $, from which the following  lemma,  known as the Poincar\'e inequality or the $L^2$-contraction Lemma, follows.
\begin{lemma}
\label{lem: L2exp}
Let $(\Omega,P,\pi)$ be a finite  irreducible  chain.  Let $f \in \R^{\Omega}$.  Then for all $t \geq 0$
\begin{equation}
\label{eq: L2contraction0}
\Var_{\pi}H_tf \leq e^{-2 \la t}\Var_{\pi}f.
\end{equation}
\end{lemma}
We now state a particular case of Starr's maximal inequality (\cite{starr1966operator} Proposition 3). The proof in the discrete time setup could be found in \cite{cutoff}. Let $f \in \R^{\Omega}$. Define
its \textbf{\textit{maximal function}}
by $f^{*}(x):=\sup_{t \geq 0}|H_{t}f (x)|$.
\begin{theorem}[Maximal inequality \cite{starr1966operator}]
\label{thm: maxergodic}
Let $(\Omega,P,\pi)$ be a finite reversible irreducible Markov chain. Then for every $f \in \R^{\Omega}$
\begin{equation}
\label{eq: ergodic1}
 \|f^{*} \|_{2} \leq 2 \|f \|_2.
\end{equation}
\end{theorem}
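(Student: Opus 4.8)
The plan is to deduce the continuous-time bound from the analogous discrete-time maximal inequality for the powers of $P$ (which is Starr's inequality in discrete time, proved in \cite{cutoff}), the bridge being a one-line averaging identity for the heat kernel. First I would note that, by definition, $H_t(x,y)=\sum_{k\ge 0}\frac{e^{-t}t^{k}}{k!}P^{k}(x,y)$, so that for every $f\in\R^{\Omega}$, every $x\in\Omega$ and every $t\ge 0$,
\[
H_{t}f(x)=\sum_{k\ge 0}\frac{e^{-t}t^{k}}{k!}\,P^{k}f(x)
\]
is a convex combination of the numbers $\{P^{k}f(x)\}_{k\ge 0}$ (the weights are the $\mathrm{Poisson}(t)$ probabilities and sum to $1$). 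Hence $|H_{t}f(x)|\le\sup_{k\ge 0}|P^{k}f(x)|$ for all $t\ge 0$, and so, pointwise on $\Omega$,
\[
f^{*}(x)=\sup_{t\ge 0}|H_{t}f(x)|\;\le\;\tilde f(x):=\sup_{k\ge 0}|P^{k}f(x)|,
\]
which reduces matters to the discrete-time bound $\|\tilde f\|_{2}\le 2\|f\|_{2}$.

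For that bound the point to exploit is that $P$ is not merely an $L^{2}(\pi)$-contraction but a reversible Markov operator: it is self-adjoint on $L^{2}(\pi)$ and Markovian, hence a contraction on every $L^{p}(\pi)$ and positivity preserving. A natural route is the classical dominated-ergodic-theorem argument: via a Rota-type dilation one represents the averaging along powers of $P$ on an enlarged probability space as conditional expectation along a \emph{decreasing} family of $\sigma$-fields, so that $\tilde f$ is controlled in $L^{2}(\pi)$ by the maximal function of a (reverse) martingale; Doob's $L^{2}$ maximal inequality — constant $p/(p-1)$, equal to $2$ at $p=2$ — then gives $\|\tilde f\|_{2}\le 2\|f\|_{2}$. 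Carrying this out is exactly the content of \cite{cutoff} (and goes back to \cite{starr1966operator}); combined with the first step it yields $\|f^{*}\|_{2}\le 2\|f\|_{2}$.

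The first step is soft; the real work — and the main obstacle — is the discrete-time inequality, because a maximal inequality of this kind genuinely fails for an arbitrary self-adjoint $L^{2}$-contraction, so positivity/Markovianity must enter, and getting the sharp constant $2$ is precisely what passing through the dilation and Doob's inequality achieves. One could instead run the dilation directly for the symmetric Markov semigroup $(H_{t})_{t\ge 0}$ (obtaining a continuous-parameter reverse martingale dominating $f^{*}$, which also avoids the even/odd-power bookkeeping one meets with the powers of $P$), or simply quote \cite{starr1966operator}, whose Proposition~3 is already stated at the level of such semigroups.
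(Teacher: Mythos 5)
Your first step is sound: since $H_tf(x)=\sum_{k\ge 0}\frac{e^{-t}t^{k}}{k!}P^{k}f(x)$ is a Poisson average, indeed $f^{*}(x)\le \tilde f(x):=\sup_{k\ge 0}|P^{k}f(x)|$ pointwise. The gap is in the discrete-time input you then invoke. The Rota-type dilation represents only the \emph{even} powers $P^{2n}$ as conditional expectations along a decreasing filtration, so Doob's $L^{2}$ inequality gives $\|\sup_{n}|P^{2n}f|\|_{2}\le 2\|f\|_{2}$; it does not, by itself, give the constant $2$ for $\sup_{k\ge 0}|P^{k}f|$. The theorem here assumes nothing like laziness or nonnegativity of the spectrum of $P$ (eigenvalues may be arbitrarily close to $-1$), and the standard even/odd patch, $\sup_{k}|P^{k}f|\le\max\bigl(\sup_{n}|P^{2n}f|,\sup_{n}|P^{2n}(Pf)|\bigr)$, only yields a constant of order $2\sqrt{2}$ (or $4$) at $p=2$. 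So, as written, your reduction proves $\|f^{*}\|_{2}\le C\|f\|_{2}$ for some absolute $C>2$ rather than the stated constant $2$ (which is exactly what is used to get the bound $4/m^{2}$ in Corollary \ref{cor: maxergcor}); declaring that the all-powers, constant-$2$ discrete bound is "exactly the content" of the cited discrete-time argument is precisely the step that needs justification, and your own sketch (reverse martingale plus Doob) does not cover the odd powers.

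The clean way out is the one you mention only in passing, and it is in fact what the paper does: the statement is quoted from Proposition 3 of \cite{starr1966operator}, which is formulated directly for the symmetric Markov semigroup $(H_t)_{t\ge 0}$; there the relevant operators automatically have nonnegative spectrum (e.g. $H_t=H_{t/2}^{2}$), the continuous-parameter dilation applies, and Doob's constant $p/(p-1)=2$ survives. (The paper itself gives no proof beyond this citation, together with a pointer to \cite{cutoff} for a discrete-time analogue.) Either quote Starr's semigroup result, or carry out the dilation argument at the semigroup level; but do not route through $\sup_{k}|P^{k}f|$ for a general reversible $P$ if you want the constant $2$.
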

For any $B \subseteq \Omega$ and $s \in \R_+$, set $\rho(B):=\sqrt{\pi(B)(1-\pi(B))}=\sqrt{\Var_{\pi}1_B}
$ and $\sigma_{s}:=\rho(B)e^{- \la s} $.  Note that by Lemma \ref{lem: L2exp}, $\sigma_{s} \geq \sqrt{\Var_{\pi}H_{s}1_B}$. We define the \emph{good set for }$B$ \emph{from time}
$s$ \emph{within }$m$ \emph{standard-deviations} to be
\begin{equation}
\label{eq: goodsetforB}
G_{s}(B,m):=\left\{ y: |\h_y^t(B)-\pi(B)| < m\sigma_{s} \text{,
for all }t \geq s \right\}.
\end{equation}
The following corollary follows by combining Lemma \ref{lem: L2exp} with Theorem \ref{thm: maxergodic}. 
\begin{corollary}
\label{cor: maxergcor}
Let $(\Omega,P,\pi)$ be a finite reversible irreducible chain. Then 
\begin{equation}
\label{eq: G_t}
 \pi (G_{s}(B,m)) \geq 1-\frac{4}{m^{2}}, \text{ for all }B \subseteq \Omega,\, s\geq 0 \text{ and }m>0.
\end{equation}
\end{corollary}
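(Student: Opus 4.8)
The plan is to unwind the definition of $G_{s}(B,m)$ into a statement about a maximal function and then run the ``Starr $+$ $L^{2}$-contraction $+$ Chebyshev'' template outlined in \S\ref{s:related}. Fix $B\subset\Omega$ (we may assume $0<\pi(B)<1$, the degenerate cases being excluded) and set $f:=1_{B}-\pi(B)$. Then $\E_{\pi}[f]=0$, $\Var_{\pi}f=\pi(B)(1-\pi(B))=\rho(B)^{2}$, and under the operator identification of Definition \ref{def: L_p distance of measures} one has $H_{t}f(y)=\mathbb{E}_{y}[f(X_{t})]=\h_{y}^{t}(B)-\pi(B)$ for every $t\ge0$ and $y\in\Omega$. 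Thus $G_{s}(B,m)=\{y:\ \sup_{t\ge s}|H_{t}f(y)|<m\sigma_{s}\}$.

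Next I would introduce $g:=H_{s}f$ and use the semigroup identity $H_{t}=H_{t-s}H_{s}$ for $t\ge s$ to rewrite the running supremum as a full maximal function:
$$\sup_{t\ge s}|H_{t}f(y)|=\sup_{u\ge0}|H_{u}g(y)|=g^{*}(y).$$
Hence the complement of $G_{s}(B,m)$ is exactly $\{y:\ g^{*}(y)\ge m\sigma_{s}\}$, and it suffices to bound its $\pi$-measure. This is the only step requiring a little care: one must perform the time shift \emph{before} invoking Starr's inequality, so that the exponential gain from the contraction lemma is applied to $H_{s}f$ rather than wasted. No measurability or attainment issue arises, since $\Omega$ is finite and $t\mapsto H_{t}f(y)$ is a finite combination of decaying exponentials.

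Now I would estimate $\|g^{*}\|_{2}$. Starr's maximal inequality (Theorem \ref{thm: maxergodic}) gives $\|g^{*}\|_{2}\le 2\|g\|_{2}$. Since $\E_{\pi}[g]=\E_{\pi}[H_{s}f]=\E_{\pi}[f]=0$ by stationarity of $\pi$, we have $\|g\|_{2}^{2}=\Var_{\pi}(H_{s}f)$, and the $L^{2}$-contraction Lemma (Lemma \ref{lem: L2exp}) yields $\Var_{\pi}(H_{s}f)\le e^{-2s/\rel}\Var_{\pi}f=e^{-2s/\rel}\rho(B)^{2}=\sigma_{s}^{2}$. Combining, $\|g^{*}\|_{2}^{2}\le 4\sigma_{s}^{2}$.

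Finally, Markov's inequality applied to $(g^{*})^{2}$ gives
$$\pi\{y:\ g^{*}(y)\ge m\sigma_{s}\}\le\frac{\|g^{*}\|_{2}^{2}}{m^{2}\sigma_{s}^{2}}\le\frac{4}{m^{2}},$$
and taking complements yields $\pi(G_{s}(B,m))\ge 1-4/m^{2}$, as claimed. The argument is short and has no genuine obstacle; the substantive content is entirely the invocation of Starr's inequality, with the time-shift bookkeeping being the one spot where an error could creep in.
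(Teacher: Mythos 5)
Your proof is correct and is essentially identical to the paper's: the paper also sets $f_s:=H_s(1_B-\pi(B))$ (your $g$), applies Starr's inequality together with the $L^2$-contraction lemma to get $\|f_s^*\|_2\le 2\sigma_s$, identifies the complement of $G_s(B,m)$ with $\{f_s^*\ge m\sigma_s\}$ via the semigroup property, and finishes with Markov's inequality. No differences worth noting.
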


\emph{Proof:}
For any $s \geq0$, let $f_s(x):=H_{s}(1_{B}(x)-\pi(B)) =\h_x^s(B)-\pi(B)$.  Then by Lemma \ref{lem: L2exp} and Theorem \ref{thm: maxergodic}
\begin{equation}
\label{eq: Good1}
\|f_s^*\|_2 \leq 2\|f_s\|_2 \leq 2e^{- \la s}\|1_{B}(x)-\pi(B) \|_{2}=2\sigma_{s}. 
\end{equation}
For any $t \geq 0$, $H_tf_s(x)=f_{t+s}(x)=\h_x^{t+s}(B)-\pi(B)$. 
Then, in the notation of Theorem \ref{thm: maxergodic}, $$f_s^*(x):=\sup_{t
\geq 0}|H_{t}f_s(x) |=\sup_{t \geq s}|\h_x^{t}(B)-\pi(B) |. $$
Hence $D:=\left\{x \in \Omega:f_s^{*}
(x) \geq m\sigma_{s}\right\}$ is the complements of $G_{s}(B,m)$. Thus by Markov inequality and (\ref{eq: Good1}) 
\begin{equation*}
1-\pi(G_{s}(B,m))=\pi(D) =\pi \left\{\left(f_s^{*}
\right)^2 \geq  (m\sigma_{s})^{2}\right\}\leq 4m^{-2}. \qed
\end{equation*}
\begin{conjecture}
\label{con:nonrevmax}
Let $(\Omega,P,\pi)$ be a finite irreducible Markov chain (not necessarily reversible). Define $t_*:=\inf\{t: \mathbf{Var}_{\pi}H_t f \leq e^{-2} \mathbf{Var}_{\pi}f \text{ for all }f \in \R^{\Omega} \}$ as in Conjecture \ref{con:nonrev}. Let $f^{*}_s(x):= \sup_{t \geq s}|H_{t}f (x)|$. Then for every $\eps \in (0,1) $  there exists a constant $C_{\eps}$ (independent of $(\Omega,P,\pi)$) such that for every $f \in \R^{\Omega}$ of mean zero (i.e.\ $\mathbb{E}_{\pi}f=0 $) we have that 
\begin{equation}
\label{eq: ergodic1nonrev}
 \mathbb{E}_{\pi}f^{*}_{C_{\eps}t_*}  =\| f^{*}_{C_{\eps}t_*} \|_1 \leq \eps \|f \|_\infty.
\end{equation}
\end{conjecture}
\begin{remark}
\label{rem:weirdmaxinequality}
If \eqref{eq: ergodic1nonrev} holds then one could prove the upper bound on $\mix(\eps) $ from Conjecture \ref{con:nonrev} by imitating the proof of the upper bound on $\mix(\eps)$ from Proposition \ref{prop: hitmix1/2intro}, with \eqref{eq: ergodic1nonrev} replacing \eqref{eq: ergodic1}. 
\end{remark}
\section{Inequalities relating $t_{\rm mix}(\cdot)$ and ${\rm hit}_{\cdot}(\cdot)$}
\label{sec: 3}
Our aim in this section is to obtain inequalities relating $t_{\rm mix}(\epsilon)$ and ${\rm hit_{\beta}(\delta)}$ for suitable values of $\beta$, $\epsilon$ and $\delta$ using Corollary \ref{cor: maxergcor}. As was shown in \cite{cutoff}, these two notions of mixing are intimately connected to each other. In this section we  refine the analysis from \cite{cutoff}. Corollary \ref{cor: Dc} below contains the more difficult half of Proposition \ref{prop: hitmix1/2intro}. 
\begin{lemma}
\label{lem: hitprob}
Let $(\Omega,P,\pi)$ be a finite irreducible reversible chain.
Let $\mu \in \PP(\Omega)$,  $\alpha,p \in (0,1)$,  $s\geq 0$ and  $B
\subseteq \Omega$. Then
\begin{equation}
\label{eq: hitmixct}
\pi(B)- \h_{\mu}^{\mathrm{hit}_{\alpha ,\mu}(p)+s}(B) \leq p\pi(B)+2(1-p)e^{- \la s}  \sqrt{(1-\alpha)^{-1} \pi(B)\pi(B^{c})}.
\end{equation}
\end{lemma}
\begin{proof}
Denote $\tau:=\mathrm{hit}_{\alpha ,\mu}(p)$,  $\rho(B):=\pi(B)\pi(B^{c}) $ and $\ell:=2e^{- \la s}\sqrt{(1-\alpha)^{-1}} $. Consider
$$H:=\left\{ y:
|\h_y^{t}(B)-\pi(B)| < \ell  \sqrt{
\rho(B)}
\text{
for all }t \geq s \right\}. $$
By Corollary \ref{cor: maxergcor}, 
$\pi(H) \geq \alpha$.
By the Markov property and the
definition of $H$, $$\h_{\mu}[X_{\tau+s}\in B \mid T_{H}
\leq \tau] \geq  \pi(B)-\ell  \sqrt{
\rho(B)}.$$ Since $\tau=\mathrm{hit}_{\alpha,\mu}(p)$ and  $\pi(H) \geq \alpha $, we get that $\h_{\mu}[ T_{H}\leq \tau] \geq 1-p$. Thus 
\begin{equation*}
\begin{split}
& \pi(B)- \h_{\mu}^{\tau+s}(B) \leq \pi (B)- \h_{\mu}[X_{\tau+s}\in B,T_{H}
\leq \tau] \\ & = \h_{\mu}[T_{H}
> \tau] \pi(B)+\h_{\mu}[T_{H}
\leq \tau] \left(\pi(B)-\h_{\mu}[X_{\tau+s}\in B \mid T_{H}
\leq \tau] \right) \\ & \leq p \pi(B)+(1-p)\left(\pi(B)-\h_{\mu}[X_{\tau+s} \in B \mid T_{H}
\leq \tau] \right) \leq p \pi(B)  +(1-p)\ell  \sqrt{\rho(B)} .
\end{split}
\end{equation*}
This concludes the proof of (\ref{eq: hitmixct}).
\end{proof}

\begin{corollary}
\label{cor: Dc}
Let $(\Omega,P,\pi)$ be a reversible irreducible finite chain. Let
$\mu \in \PP(\Omega)$. Let $0 < \eps,\alpha < 1$ and let $\delta \in (0,\eps) $. Denote $s:= 0 \vee \rel \log
\left(\frac{1-\epsilon+\delta}{ \sqrt{(1-\alpha) \epsilon \delta}} \right) $. Then
\begin{equation}
\label{eq:quan3}  
 \mixmu(\epsilon) \leq \mathrm{hit}_{\alpha,\mu}(\epsilon-\delta) +s .
\end{equation}
\end{corollary}
\begin{proof}
 Denote  $t:= \mathrm{hit}_{\alpha,\mu}(\epsilon-\delta)$. If $1-\epsilon+\delta < \sqrt{(1-\alpha)\epsilon \delta} $, then we can prove \eqref{eq:quan3} for $\delta'<\delta$ such that  $1-\epsilon+\delta' = \sqrt{(1-\alpha)\epsilon \delta'} $, so we may assume that $s= \rel \log
\left(\frac{1-\epsilon+\delta}{ \sqrt{(1-\alpha)\epsilon \delta}} \right) \geq 0 $.  Let $B \subseteq \gO $. Denote $\pi(B)=z$.
 By (\ref{eq: hitmixct}) and the fact that  $h(z):=(\epsilon-\delta)z+2\sqrt{\epsilon \delta z(1-z)} $ attains its maximum in $[0,1]$ at $z_{*}=\frac{\eps}{\eps+\del} $ and $h(z_{*})=\eps$, we get that $$ \pi(B)-\h_{\mu}^{t+s}(B) \leq (\epsilon-\delta)z+2\sqrt{\epsilon \delta z(1-z)} \leq \eps,$$
The claim now follows by maximizing over $B$. \end{proof}

\section{Proofs of Theorem \ref{thm: cutoffandsetsworstinexpectation} and Propositions \ref{prop: hitmix1/2intro} and \ref{cor: hittingtimesinequality}}
Let $A \subsetneq\ \Omega $.  Let  $Q_A$ (resp.~$P_A$) be the restriction of $Q$ (resp.~$P$) to $A$. Note that $Q_A$ and $P_A$ are substochastic.  The spectral gap of $P_A$, denoted by $\la(A)$, is defined as the minimal eigenvalue of $I-Q_A$ (it is often referred to as the Dirichlet eigenvalue of $I-Q$ on $A$). Denote $\Lambda(c):=\min_{A:\pi(A) \leq c }\la(A)$.
Let $\pi_{A}$ denote $\pi$ conditioned
on $A$ (i.e.~$\pi_{A}(y)=1_{y \in A}\pi(y)/\pi(A)$).
\begin{lemma}
\label{lem: AF1}
Let $(\Omega,P,\pi)$ be a finite irreducible  Markov chain. Let
$A \subsetneq\ \Omega$ be non-empty. Let $\alpha>0$ and $w \geq 0$. Let $B(A,t,\alpha):=\left\{y:\h_{y}
\left[ T_{A^c} >  t 
\right] \geq \alpha  \right\}$. Then
\begin{equation}
\label{eq: CM0}
\la(A) \geq \pi(A^c) \la \quad \text{and so} \quad \forall c \in (0,1),\, \Lambda(c) \geq (1-c)\la.
\end{equation}
\begin{equation}
\label{eq: CM1}
\frac{1}{\pi(A)} \h_{\pi}[T_{A^{c}} > t]=   \h_{\pi_{A}}[T_{A^{c}} > t]\leq e^{-\la(A)t} \leq e^{-\Lambda(\pi(A))t}  \leq   e^{-\la
\pi(A^{c}) t}, \quad \text{for all }t \geq 0.
\end{equation}
\begin{equation}
\label{eq: CM2}
\pi \left(B(A,t,\alpha)\right) \leq \pi(A)   e^{-2 \la(A)t}\alpha^{-2} \leq \pi(A)   e^{-2 \pi(A^c) \la t}\alpha^{-2}   .
\end{equation}
\end{lemma}
\begin{remark}
The inequality \eqref{eq: CM0} is well-known (cf.\ \cite[Eq.\ (1.4)]{spectral}). We include its proof for the sake of completeness.
\end{remark}
\emph{Proof of Lemma \ref{lem: AF1}:}
 Consider the inner-product $\langle \cdot,\cdot \rangle_{\pi_{A}}$ on $\R^A$ given by  $\langle f,g \rangle_{\pi_{A}}:=\sum_{a \in A}\pi_{A}(a)f(a)g(a)$. We identify every $g \in \R^{A} $ with its extension to $\Omega$ obtained by setting $g \equiv 0 $ on $\Omega \setminus A $. Using the fact that for all $g \in \R^A$ we have $\langle (I-Q) g,g\rangle _{\pi}=\langle (I-P) g,g\rangle _{\pi}$, together with the Perron-Frobenius (for the non-negativity) and  the Courant-Fischer variational characterization of eigenvalues we have 
\begin{equation}
\label{e:varchar}
\la(A)=\inf \left\{\langle (I-P_{A}) g,g\rangle _{\pi_{A}}/\langle g,g \rangle _{\pi_{A}}: g \in \R_+^A ,\, g \text{ non-constant}  \right\}.
\end{equation}
We may identify each $g \in \R^A$ with $g \in \R^{\Omega}$ s.t.~$g \equiv 0$ on $A^c$ and vice-versa.
Hence \[\la(A)=\inf \left\{\langle (I-P) g,g\rangle _{\pi}/\langle g,g \rangle _{\pi}:g \in \R_+^{\Omega} \text{ non-constant and }g=0~\text{on}~A^{c}  \right\}.\] Also observe that by the Cauchy-Schwarz inequality, for all $g\geq 0$ such that $g=0$ on $A^{c}$,  
\[\Var_{\pi}g=\mathbb{E} _{\pi} g^2-(\mathbb{E} _{\pi} g)^2 =\mathbb{E} _{\pi} g^2-(\pi(A) \mathbb{E} _{\pi_A} g)^2\geq \mathbb{E} _{\pi} g^2-[\pi(A)]^{2}\mathbb{E} _{\pi_A} g^2= \pi(A^{c}) \langle g,g \rangle _{\pi}.\]

Thus by the extremal characterization of the spectral gap
\begin{equation}
\label{eq:gap(A)gap}
\la=\inf \left\{\langle (I-P) g,g\rangle _{\pi}/\Var_{\pi}g:g \in \R^{\Omega} \text{ non-constant} \right\} \leq \la(A)/\pi(A^c). 
\end{equation}

Denote the heat kernel of the chain killed upon escaping $A$ by $H_t^A:=e^{-t} \sum_{k=0}^{\infty}(kP_A)^t/k! $. Let $g  \in \R^A$. Denote $g_t:=H_{t}^A g $ and $\|g\|_{A,p}^p:=\sum_{a \in A} \pi_A(a)|g(a)|^p $, for $1 \leq p < \infty$. Then by \eqref{e:varchar}
\begin{equation}
\label{eq:ddt}
\frac{d}{dt}\|g_{t}\|_{A,2}^2=\frac{d}{dt}\langle H_{t}^A g,H_{t}^Ag\rangle _{\pi_{A}}=-2\langle (I-P_{A})H_{t}^A g,H_{t}^Ag\rangle _{\pi_{A}} \leq -2 \la(A)\|g_{t}\|_{A,2}^2.
\end{equation}
Hence $\|g_{t}\|_{A,1}^2 \leq \|g_{t}\|_{A,2}^2 \leq \|g \|_{A,2}^2 e^{-2 \la(A)t} $. Taking $g=1_A$, by \eqref{eq:gap(A)gap} we get that
\begin{equation}
\label{eq:hitla(A)}
 \h_{\pi_A}[T_{A^c}>t]= \|g_{t}\|_{A,1} \leq \|g_{t}\|_{A,2} \leq \|1_{A}\|_{A,2} e^{- \la(A)t}=e^{- \la(A)t} \leq e^{-  \pi(A^c) \la t}.
\end{equation}

Write $B=B(A,t,\alpha)$. Then $B \subseteq \{a \in A: g_t^2(a)>\alpha^2 \} $. By \eqref{eq:hitla(A)}$$\pi(B)/\pi(A) = \pi_{A} (B) \leq \alpha^{-2} \|g_{t}\|_{A,2}^2 \leq \alpha^{-2}e^{-2  \pi(A^c) \la t}  . \qed $$
\begin{corollary}
\label{cor: easydirection}
Let $(\Omega,P,\pi)$ be a finite irreducible  Markov chain. Let $\mu \in \PP(\Omega)$,  $0 < \eps,\alpha < 1 $ and $0<\delta \leq 1- \eps $. Denote $s:=\frac{\log (\frac{1-\alpha}{\delta})}{\Lambda(1-\alpha)} \leq \frac{\log (\frac{1-\alpha}{\delta})}{\alpha \la } $.
Then$${\rm hit}_{\alpha,\mu}(\epsilon+ \delta )\leq t_{\rm mix,\mu}(\epsilon)+s \vee 0 .$$
\end{corollary}
\begin{proof}
By decreasing $\del$ if necessary, we may assume that $s \geq 0$.
Take an arbitrary set $A$ with $\pi(A)\geq \alpha$
and $\mu \in \PP ( \Omega)$. It follows by coupling of the chain with initial distribution $\h_{\mu}^t$
with the
stationary chain that for all $t \geq 0$
\begin{equation}
\label{eq: thitinequality3}
\h_{\mu}[T_A > t+s]\leq  d_{\mu}(t)+\h_{\pi}[T_A>s]\leq
d_{\mu}(t)+(1-\alpha)e^{-\la(A^c)s }\leq d_{\mu}(t)+\delta.
\end{equation}
where the penultimate  inequality follows from (\ref{eq: CM1}) and
the last from the choice of $s$.  Plugging $t=t_{\rm mix , \mu }(\epsilon)$ 
in (\ref{eq: thitinequality3})  and maximizing over $A$ with
$\pi(A)\geq \alpha $ concludes the proof. 
\end{proof}
\emph{Proof of Proposition \ref{prop: hitmix1/2intro}:} By combining Corollaries \ref{cor: Dc} and \ref{cor: easydirection} it remains only to show that $\mixmu(\epsilon) \leq \mathrm{hit}_{1-\delta,\mu}(\epsilon-\delta) + \log
(2/\delta^{3}) \rel \log ( \rel \vee e) $, for $0<\delta \leq \eps <1$. Set $t:=\log
(2/\delta^{3}) \rel \log ( \rel \vee e)$. Let $A \subseteq \gO$. For all $x \in \gO$ and $s \geq 0$ we have that $|\frac{d}{ds}H_s(x,A)|=|\delta_x H_s (P-I)1_A | \leq 1 $ and so by \eqref{eq: L2contraction0} (used to deduce that $\Var_\pi H_s 1_A \leq e^{-2 \la s}\Var_{\pi}1_A \leq \frac{e^{-2 \la s}}{4} $)
\[ \sum_x \pi(x) | \frac{d}{ds}[H_s(x,A)- \pi(A)]^{2} |  \leq 2 \sum_x \pi(x) |H_s(x,A)- \pi(A)| \leq 2 \sqrt{\Var_\pi H_s 1_A} \leq  e^{- \la s}.   \]
 Denote $g(x):=\int_{t}^{\infty} | \frac{d}{ds}[H_s(x,A)- \pi(A)]^{2} |ds $. Then $\|g\|_1 \leq  \int_{t}^{\infty} e^{- \la s}ds= e^{-\la t}/\la \leq \del^3 /2$ and  $ \Var_{\pi}H_t 1_A \leq \frac{1}{4}e^{-2 \la t} \leq \del^{6} /8 $. Let $B:=\{b \in \gO: g(b)+|H_t(b,A)- \pi(A) |^2> \delta ^{2}\} $. Then
\[\pi(B) \leq \delta^{-2} (\|g \|_1+\Var_{\pi}H_t 1_A) \leq \delta.  \]
From the definition of $B$ it follows that if $x \notin B$ then $\sup_{s \geq t} |H_s(x,A)- \pi(A)| \leq \delta $. Thus \[|\h_{\mu}^{\mathrm{hit}_{1-\delta,\mu}(\epsilon-\delta)+t}(A)-\pi(A)| \leq \h_{\mu}[T_{B^{c}}>\mathrm{hit}_{1-\delta,\mu}(\epsilon-\delta)]+\delta \leq \eps. \qed \]
\begin{corollary}
\label{prop: hitpqinequalities}
Let $(\Omega,P,\pi)$ be an irreducible finite Markov chain. Let $\mu \in \PP (\Omega)$. Let  $0< \epsilon < \delta  < 1$ and  $0<\beta \leq \gamma < 1$. Denote $s:=\frac{\log (\frac{1-\beta }{(1-\gamma)\epsilon^{2}})}{2\Lambda(1-\beta)} \leq \frac{1}{2\beta} t_{\mathrm{rel}} \log \left( \frac{1-\beta }{(1-\gamma)\epsilon^{2}} \right) $. Then
\begin{equation}
\label{eq: thitinequality1}
 \mathrm{hit}_{\gamma,\mu }(\delta) \leq \mathrm{hit}_{\beta,\mu }(\delta)
\leq  \mathrm{hit}_{\gamma,\mu }(\delta- \epsilon) +s.
\end{equation}
\end{corollary}
\begin{proof}
The first inequality in (\ref{eq: thitinequality1}) is trivial. We now prove the second inequality in (\ref{eq: thitinequality1}).  Let $A$ be an arbitrary set with $\pi(A) \geq \beta$. Using the notation from Lemma \ref{lem: AF1}, let $B:= B(A^{c},s,\epsilon):=\{y:\h_y[T_{A}>s]>\eps \} $. Then by (\ref{eq: CM2}) $$ \pi(B) \leq \epsilon^{-2}\pi(A^c)e^{-\log \left(
\frac{1-\beta}{(1-\gamma)\epsilon^{2}} \right)} \leq 1- \gamma.$$ Set $t_1:= \mathrm{hit}_{\gamma,\mu }(\delta- \epsilon)$. Then $\h_{\mu}[T_{B^c}>t_{1}] \leq \delta- \eps $. By the definition of $B$ and the Markov property,
\begin{equation*}
\h_{\mu}[T_{A}>t_{1}+s
\mid T_{B^{c}} \leq t_{1}] \leq \max_{d \notin B }\h_{d}[T_{A}>s
] \leq \eps.
\end{equation*}
Whence
\begin{equation*}
 \h_{\mu}[T_{A}>t_{1}+s] \leq \h_{\mu}[T_{B^{c}}>t_{1}]+\h_{\mu}[T_{A}>t_{1}+s \mid T_{B^{c}} \leq t_{1}] \leq (\delta-\epsilon)+\epsilon=\delta.
\end{equation*}
Since $A$ was arbitrary, this concludes the proof of (\ref{eq: thitinequality1}).
\end{proof}

\begin{proposition}
\label{prop: equivoftalphascutoff}
Let $(\Omega_n,P_n,\pi_n)$ be a sequence of finite irreducible reversible
chains. Let $\mu_n \in \PP(\Omega_n)$. Assume that $\rel^{(n)}=o(\mixmun(\delta) )$ for some $0 < \delta <1$. Then for all $\beta \in (0,1)$,
\begin{equation}
\label{eq: hitThetamix}
 (1-o(1))\mixmun(\delta)  \leq  \mathrm{hit}_{\beta,\mu_{n}}^{(n)}(\delta/2)
\leq  (1+o(1))t_{\mathrm{mix},\mu_n}^{(n)}(\delta/8).
\end{equation}
\begin{equation}
\label{eq: hitThetamix2}
 \rel^{(n)}=o(\mathrm{hit}_{\beta,\mu_{n}}^{(n)}(\delta/2) )
\end{equation}
   Moreover, (1) below implies (2):
\begin{itemize}
\item[(1)] There exists some $\alpha \in (0,1)$ such that
the sequence exhibits a $\mathrm{hit}_{\alpha,\mu_{n}}$-cutoff.
\item[(2)] For every $\alpha
\in (0,1)$ the sequence exhibits a $\mathrm{hit}_{\alpha,\mu_{n}}$-cutoff.
\end{itemize}
Moreover, if (1) holds for some $\alpha$ then 
\begin{equation}
\label{eq: ratiohit}
\lim_{n \to \infty}
\mathrm{hit}_{\beta,\mu_{n}}^{(n)}(1/4)/\mathrm{hit}_{\alpha,\mu_{n}}^{(n)}(1/4)
= 1, \quad \text{for all} \quad \beta \in (0,1).
\end{equation}
\end{proposition}
\begin{proof}
 First note that \eqref{eq: hitThetamix} follows from Proposition \ref{prop: hitmix1/2intro} together with the assumption that $\rel^{(n)}=o(\mixmun(\delta) )$, while \eqref{eq: hitThetamix2} follows from the \eqref{eq: hitThetamix}. Assume (1) holds for some $\alpha$.  This in conjunction with Corollary \ref{prop: hitpqinequalities} and \eqref{eq: hitThetamix2}  implies (\ref{eq: ratiohit}), which combined with another application of Corollary \ref{prop: hitpqinequalities} yield (2) (cf.~\cite[Proposition 3.6]{cutoff}). 
\end{proof}

 We now present two
lemmas regarding expected hitting times inequalities. Proposition \ref{cor:
hittingtimesinequality} follows by combining
these two lemmas. The first of which is simpler and gives better bounds for
some poruses. The second one gives better asymptotic in the sense that it
follows from it that $t_{\h}(\epsilon)-t_{\h}(1-\epsilon)
\leq C \rel \epsilon^{-1}$, for some absolute constant $C$, whereas the first
lemma only implies that $t_{\h}(\epsilon)-t_{\h}(1-\epsilon)
\leq C \rel  \epsilon^{-1} \log (1/\epsilon)$.
\begin{lemma}
\label{lem: frombigtobigger}
Let $(\Omega,P,\pi)$ be a finite  irreducible Markov chain. Let
$ \epsilon \in (0,1)$ and $k \geq 0$. Let $A \subseteq \Omega $ be such $\pi(A^{c})
\leq  \epsilon$. There exists $I=I(A,k) \subseteq \Omega$ with $\pi(I^{c})
\leq \frac{\epsilon}{2
\cdot 3^{k}}$, such that $$ \mathbb{E}_{z}[T_{A}]
\leq (9/4+k/2) (\log 3)/\la(A^c) \leq (9/4+k/2) (1-\epsilon)^{-1}
\rel \log 3, \text{ for all } z \in I.  $$
\end{lemma}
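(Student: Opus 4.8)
The plan is to prove Lemma~\ref{lem: frombigtobigger} by iterating the ``large set is easy to hit from most points'' idea from Lemma~\ref{lem: AF1}. First I would record the base case: since $\pi(A) \ge 1-\epsilon$, equation (\ref{eq: CM1}) gives $\h_\pi[T_A > t] \le \epsilon \exp(-t(1-\epsilon)/\rel)$, so with $t_* := (1-\epsilon)^{-1}\rel \log 3$ we get $\h_\pi[T_A > t_*] \le \epsilon/3$. By a Markov-type averaging argument (exactly as in the proof of (\ref{eq: CM2})), the set $J_0 := \{z : \h_z[T_A > t_*] \ge 1/3\}$ has $\pi(J_0) \le \epsilon$; but this is too crude for the claimed bound, so instead I would track the expectation directly.

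The key iteration: define, for $j \ge 0$, the set $I_j$ of points $z$ from which $\mathbb{E}_z[T_A] \le (3+j)(1-\epsilon)^{-1}\rel\log 3 =: m_j$. I want $\pi(I_j) \ge 1 - \epsilon/(2\cdot 3^j)$. For $j=0$: by (\ref{eq: CM1}), $\mathbb{E}_\pi[T_A] = \int_0^\infty \h_\pi[T_A>t]\,dt \le \epsilon(1-\epsilon)^{-1}\rel$, hence by Markov's inequality the set of $z$ with $\mathbb{E}_z[T_A] > 3(1-\epsilon)^{-1}\rel\log 3$ has $\pi$-measure at most $\epsilon/(3\log 3) < \epsilon/2$ — but I actually need the stronger statement with a factor $3$ to spare at each step, so the right formulation is: from (\ref{eq: CM1}) the set $I_0 := \{z : \mathbb{E}_z[T_A] \le 3(1-\epsilon)^{-1}\rel\log 3\}$ satisfies $\pi(I_0^c) \le \epsilon/(3\log 3) \le \epsilon/2$ after absorbing constants appropriately. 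Then for the inductive step, suppose $\pi(I_j) \ge 1 - \epsilon/(2\cdot 3^j)$. Apply Lemma~\ref{lem: AF1} or (\ref{eq: CM1}) with the set $I_j$ in place of $A$ (note $\pi(I_j) \ge 1-\epsilon$ when $j \ge 0$ for $\epsilon$ small, and in general $\pi(I_j) \ge 1-\epsilon/2 \ge 1/2$): from any $z$, $\mathbb{E}_z[T_{I_j}]$ is controlled, and by the strong Markov property at $T_{I_j}$,
\begin{equation*}
\mathbb{E}_z[T_A] \le \mathbb{E}_z[T_{I_j}] + \max_{y \in I_j}\mathbb{E}_y[T_A] \le \mathbb{E}_z[T_{I_j}] + m_j.
\end{equation*}
Now (\ref{eq: CM1}) applied to $I_j$ gives $\h_\pi[T_{I_j} > t] \le \pi(I_j^c)\exp(-t\pi(I_j)/\rel) \le \tfrac{\epsilon}{2\cdot 3^j}\exp(-t(1-\epsilon)/\rel)$, so $\mathbb{E}_\pi[T_{I_j}] \le \tfrac{\epsilon}{2\cdot 3^j}(1-\epsilon)^{-1}\rel$, and hence the set $I_{j+1} := \{z : \mathbb{E}_z[T_A] \le m_{j+1}\} \supseteq \{z : \mathbb{E}_z[T_{I_j}] \le (1-\epsilon)^{-1}\rel\log 3\}$ has complement of $\pi$-measure $\le \tfrac{\epsilon}{2\cdot 3^j \log 3} \le \tfrac{\epsilon}{2\cdot 3^{j+1}}$ (using $\log 3 > 1$, in fact $\log 3 > 3$ would be needed for the clean factor, so one should instead carry the harmless constant $\log 3$ inside — or simply replace ``$\log 3$'' throughout by a value large enough that $1/\log 3 \le 1/3$ is not required and track the true constant). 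This closes the induction, and taking $I = I_k$ gives the statement.

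The main obstacle I anticipate is bookkeeping the constants so the geometric decay $\epsilon/(2\cdot 3^k)$ comes out exactly with the additive increment $(1-\epsilon)^{-1}\rel\log 3$ per step; the Markov-inequality step loses a factor $1/\log 3 \approx 0.91$ rather than $1/3$, so to genuinely get the factor $3$ decay one must either pass through the probability set $\{z : \h_z[T_A>t] \ge 1/3\}$ (where the loss is a clean factor $3$ by (\ref{eq: CM2})) and then convert a probability tail bound back to an expectation bound by restarting the chain, or accept that the threshold constant multiplying $\rel\log 3$ is some fixed number and simply state the clean version. The cleanest route is: run the chain for time $t_* = (1-\epsilon)^{-1}\rel\log 3$; by (\ref{eq: CM2}) the ``bad'' set $B_j$ of points from which $I_j$ is not hit within time $t_*$ with probability $\ge 1/3$ has $\pi(B_j) \le 3\pi(I_j^c)e^{-\log 3} = \pi(I_j^c)$ — no, that gives no decay either. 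So the genuinely correct mechanism must use (\ref{eq: CM1}) with a slightly longer time, namely $t_* = (1-\epsilon)^{-1}\rel\log(6\cdot 3^{j})$ at step $j$... at which point the increments are no longer constant. I expect the actual proof resolves this by a cleverer single application rather than a naive iteration — likely using that $\mathbb{E}_z[T_A] = \int_0^\infty \h_z[T_A>t]\,dt$ and splitting the integral according to whether $z$ has hit a suitable nested family of sets, so that the geometric factor and the $\log 3$ increment are decoupled. This integral-splitting / nested-good-sets argument is the step I would spend the most care on.
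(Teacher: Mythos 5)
Your main line of attack --- iterating on the sets $I_j$ of states with $\mathbb{E}_z[T_A]\le m_j$ and passing through the strong Markov property at $T_{I_j}$ --- does not close, and you correctly diagnose why: Markov's inequality applied to $\mathbb{E}_\pi[T_{I_j}]$ only loses a factor $1/\log 3\approx 0.91$ per step, not the factor $1/3$ needed for the geometric decay $\epsilon/(2\cdot 3^k)$, and enlarging the per-step threshold to restore the factor $3$ inflates the increment so that you end up proving a bound like $(3+3k)(1-\epsilon)^{-1}\rel$ rather than $(3+k)(1-\epsilon)^{-1}\rel\log 3$. Your closing guess about the correct mechanism (a single nested family of sets plus $\mathbb{E}_z[T_A]=\int_0^\infty \h_z[T_A>t]\,dt$) is exactly right, but you do not execute it, and the execution is precisely where the content lies. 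So as written the proposal has a genuine gap: no version of the argument you actually carry out proves the stated bound for general $k$ (your base case does happen to give the $k=0$ case).

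The missing ingredient is to let \emph{both} the time and the probability threshold scale with the index, so that the decay in (\ref{eq: CM2}) comes from $e^{-w}\alpha^{-1}$ with $w$ growing twice as fast as $|\log\alpha|$. Concretely, set $a:=(1-\epsilon)^{-1}\rel\log 3$ and for $i\ge 1$ define $I_i:=\{y:\h_y[T_A>(2i+k)a]\le 3^{-i}\}$. Since $(2i+k)a\,\pi(A)/\rel\ge (2i+k)\log 3$, inequality (\ref{eq: CM2}) with $\alpha=3^{-i}$ gives $\pi(I_i^c)\le \epsilon\, 3^{i}\,3^{-(2i+k)}=\epsilon\, 3^{-(i+k)}$; note the net factor $3^{-(i+k)}$ decays geometrically even though each time increment is the constant $2a$ --- this is the decoupling you were looking for. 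Taking $I=\bigcap_{i\ge 1}I_i$, a union bound gives $\pi(I^c)\le\epsilon\sum_{i\ge1}3^{-(i+k)}=\epsilon/(2\cdot 3^k)$, and for $z\in I$ the tail bound $\h_z[T_A>ka+2ia]\le 3^{-i}$ yields $\mathbb{E}_z[T_A]\le ka+2a\sum_{i\ge 0}3^{-i}=(k+3)a$, which is the claimed estimate. Your iteration through expected hitting times of intermediate sets is not needed at all; one application of (\ref{eq: CM2}) per scale $i$, applied directly to $A$, suffices.
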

\begin{proof}
Fix $k \geq 0$. Denote $a:=(\log 3)/ \la(A^c) \leq \Lambda^{-1}(\epsilon) \log 3 \leq  (1-\epsilon)^{-1}
\rel \log 3 $. Consider $$I_{i}=I_{i}(A,k):=\left\{y:\h_{y}
\left[2T_{A} > (k+3i)a
\right] \leq 3^{-i}  \right\}.$$ Then by (\ref{eq: CM2}) we have that $$\pi(I_i^{c})
\leq (3^{-i})^{-2}\pi(A^{c}) e^{-(k+3i)\log 3} \leq
\epsilon(1/3)^{i+k}.$$ Let $I:= \bigcap_{i \in \N}I_i$. Then $\pi(I^{c}) \leq \sum_{i \in \N} \pi(I_i^c) \leq  \eps \sum_{i \in \N}3^{-(i+k)}=\frac{\epsilon}{2 \cdot 3^{k}}$.
By construction, $$\h_{z}
\left[ T_{A} -\frac{ka}{2}> \frac{3ia}{2} \right] \leq 3^{-i}, \text{ for all } z \in I \text{ and } i \in
\N.$$ Hence $\mathbb{E}_{z}[T_{A}] \leq \frac{3a/2}{1-\frac{1}{3}}  +ka= (9/4+k/2)a$ for all $z \in I$, as desired. 
\end{proof}
\begin{lemma}
\label{lem: fromhugetotinyineexpectation}
Let $(\Omega,P,\pi)$ be a finite irreducible  chain. Let $A \subseteq \Omega$ be non-empty.
Denote $\epsilon:=\pi(A)$. Let $t \geq 1$. There exists $J=J(A,t) \subseteq \Omega $ such that
\begin{itemize}
\item[(i)]
$\pi(J) \geq 1-3e^{-2t}/2$.
\item[(ii)]
For any $z \in J$ we have that $\mathbb{E}_{z}[T_{A}] \leq \rel [t+\log 2 (1+ 9\eps^{-1})+|\log \epsilon|]
$.
\item[(iii)]
For any $z \in J$ and $i\in \N$ we have that $\h_{z}[\la T_{A} \leq
 t+i\log 2 (1+\frac{3}{2 \eps})+|\log \epsilon|] \geq (1-2^{-i})^{2}.$
\end{itemize}
\end{lemma}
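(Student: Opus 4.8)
The plan is to construct the set $J=J(A,t)$ as an intersection of "good sets" built from the exponential hitting-time tail bound in Lemma \ref{lem: AF1}, mimicking the geometric-truncation argument of Lemma \ref{lem: frombigtobigger} but now iterating with a finer step size $\ell+s$ (rather than just $a_\epsilon$). Concretely, I would first use the Markov-property/conditioning trick from Corollary \ref{prop: hitpqinequalities}: starting from a point $z$ which already hits (with reasonable probability) a set of stationary measure close to $1$, one extra block of length $s=2\epsilon^{-1}\rel\log 2$ suffices to hit $A$ with probability $\ge 1/2$ (since from $\pi$, $\h_\pi[T_A>s]\le(1-\epsilon)e^{-s\pi(A)/\rel}=(1-\epsilon)4^{-1}\le 1/4$ by \eqref{eq: CM1}, and one amortizes the loss via a set $D$ of measure $\ge 1-\epsilon/2$ or so). The parameter $\ell=\rel\log 2$ is there to handle the "first reach a nice set" cost coming from Corollary \ref{cor: maxergcor}-type estimates, and $r=(t+|\log\epsilon|)\rel$ is the base offset that makes the initial probability of being in the nice set at least $1-\frac34 e^{-2t}$, via \eqref{eq: G_t} or \eqref{eq: CM2} with $m\asymp e^t$.

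The key steps, in order: (1) Fix the "target" auxiliary set. Apply Lemma \ref{lem: AF1} (specifically \eqref{eq: CM2}) with $A$, with $w$ chosen as $\log 2$ and $\alpha=1/2$, to get a set $D$ with $\pi(D)\ge 1-2(1-\epsilon)\cdot\frac12\cdot$... — more precisely choose $w$ so that $\pi(A^c)e^{-w}\alpha^{-1}$ is small enough; this yields a set $D$ from which, within time $s$, one hits $A$ with probability $\ge 1/2$. (2) Build the nested sequence $J_i:=\{y:\h_y[T_D > r+(i-1)(\ell+s)]\le 2^{-i}\}$ — or a variant — using the contraction Corollary \ref{cor: maxergcor} for the base level ($i$ small, giving the $e^{-2t}$-type bound via $m\asymp e^{t}$) and the exponential tail \eqref{eq: CM1} for the geometric decay across blocks. (3) Set $J:=\bigcap_i J_i$ and union-bound the complements to get $\pi(J)\ge 1-\frac34e^{-2t}$, establishing (i). (4) For (iii): from $z\in J$, decompose the event $\{T_A\le r+i(\ell+s)\}$ using the Markov property at the hitting time of $D$ and then at the subsequent block of length $s$; each of the $i$ "attempts" succeeds with probability $\ge 1/2$, and hitting $D$ at all by time $r+(i-1)(\ell+s)$ has probability $\ge 1-2^{-i}$, yielding $\h_z[T_A\le r+i(\ell+s)]\ge(1-2^{-i})\cdot(1-2^{-i})=(1-2^{-i})^2$ after suitably arranging the sub-block schedule. (5) For (ii): integrate the tail from (iii), $\mathbb{E}_z[T_A]\le r+\sum_{i\ge 0}(\ell+s)\h_z[T_A>r+i(\ell+s)]\le r+(\ell+s)\sum_{i\ge 0}(1-(1-2^{-i})^2)$, and bound $\sum_{i\ge0}(2^{-i+1}-4^{-i})$ by a constant $\le 11/2$, giving $\mathbb{E}_z[T_A]\le r+\frac{11}{2}(\ell+s)$.

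The main obstacle I expect is getting the constants and the block schedule to line up exactly so that both the $(1-2^{-i})^2$ form in (iii) and the clean $11/2$ coefficient in (ii) come out, while simultaneously keeping the base-level probability bound at precisely $1-\frac34 e^{-2t}$. This requires being careful about: whether the "hit $D$" phase and the "$D\to A$" phase each contribute a factor $(1-2^{-i})$ or whether the geometric rates need to be interleaved (e.g. using $2^{-j}$ on even sub-steps and on odd sub-steps), and about the exact value of $w$ (and hence the exact $\pi(D)$) needed so that the single contraction estimate at scale $r=(t+|\log\epsilon|)\rel$ delivers the $e^{-2t}$ factor rather than $e^{-t}$ — this is where the factor $2$ in $s=2\epsilon^{-1}\rel\log 2$ and the $r$ offset interact. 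Everything else is a routine combination of the Markov property, the union bound, and the two already-established estimates \eqref{eq: CM1}–\eqref{eq: CM2} and \eqref{eq: G_t}; the bookkeeping of constants is the only genuinely delicate part.
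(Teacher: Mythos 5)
Your overall architecture (a decreasing family of good sets indexed by $i$, a union bound for (i), a Markov-property decomposition for (iii), and tail integration for (ii)) matches the paper, and your step (5) is essentially the paper's computation. But there are two concrete gaps in the middle. First, the ``repeated attempts from a single set $D$'' mechanism does not work as described: after a failed block of length $s$ the chain is at an arbitrary state, not back in $D$, and re-entering $D$ within the short window $\ell=\rel\log 2$ from an arbitrary state cannot be guaranteed (indeed $\pi(D)$ may be as small as about $1/2$), so the per-block failure probabilities do not multiply to give $2^{-i}$. The paper sidesteps iteration entirely: for each $i$ it defines $C_i:=\{y:\h_{y}[T_{A}\le is]\ge 1-2^{-i}\}$ by a \emph{single} application of (\ref{eq: CM1})--(\ref{eq: CM2}) with time $is$ (so that $\pi(A)\cdot is=2i\,\rel\log 2$ and Markov's inequality at level $2^{-i}$ gives $\pi(C_i^c)\le(1-\eps)2^{-i}$), and then (iii) is just the two-factor bound $\h_{z}[T_A\le r+i\ell+is]\ge \h_{z}[X_{r+i\ell}\in C_i]\min_{y\in C_i}\h_{y}[T_A\le is]$. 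The ``$i$ attempts at probability $1/2$'' heuristic is realized by the exponential tail of $T_A$ from stationarity, not by literal restarts.

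Second, and more seriously, your route to the constant $1-\tfrac34 e^{-2t}$ cannot work: defining $J_i$ through hitting times of an auxiliary set and invoking (\ref{eq: CM1})/(\ref{eq: CM2}) at scale $r$ yields at best a factor $e^{-t\pi(D)}$ (so $e^{-t/2}$ or $e^{-t}$), never $e^{-2t}$. The factor $e^{-2t}$ comes from the $L^2$-contraction Lemma \ref{lem: L2exp}, whose exponent carries the crucial factor $2$: the paper sets $g_i:=H_{r+i\ell}1_{C_i}$, notes $\Var_\pi g_i\le e^{-2(r+i\ell)/\rel}\Var_\pi 1_{C_i}\le \eps^2 2^{-3i}(1-2^{-i})e^{-2t}$, and defines $J_i$ by the value of $\h_{y}^{\,r+i\ell}(C_i)$ at a \emph{fixed} time, bounding $\pi(J_i^c)$ by Chebyshev at threshold $\eps 2^{-i}$ (the $\eps^{-2}$ from Chebyshev cancels the $\eps^{2}=e^{-2|\log\eps|}$ from the contraction, and the $i$-dependent threshold matched against $\Var_\pi 1_{C_i}\le 2^{-i}(1-2^{-i})$ makes $\sum_i\pi(J_i^c)\le\tfrac23 e^{-2t}$). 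Your passing mention of Corollary \ref{cor: maxergcor} with $m\asymp e^{t}$ is closer in spirit, but a uniform $m$ does not exploit the decay of $\Var_\pi 1_{C_i}$ in $i$ and so will not make the union bound over $i$ close at the stated constant. This is not routine bookkeeping; it is the step where the spectral machinery actually enters the proof.
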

\begin{proof}
Denote $r=r(\epsilon,t):=
(t+|\log \epsilon| ) \rel$, $\ell:=\rel  \log
2$ and $s=s(\epsilon):=(3/2)\epsilon^{-1}\rel\log 2$.
For any $i \in \N$,  let $C_i:=\left\{y:\h_{y}[T_{A} \leq is] \geq  1- 2^{-i}
\right\}$. Then by  (\ref{eq: CM2}) we have that
\begin{equation}
\label{eq: Ciislarge}
\pi(C_i^{c}) \leq 
2^{2i}
\pi (A^c)e^{-3i \log 2 } = (1-\epsilon)2^{-i}.
\end{equation}
 Define $$J_{i}:=\left\{y:
\h_{y}^{r+i \ell}[C_{i}] \geq 1-2^{-i}=1- (1-\epsilon)2^{-i}-\epsilon 2^{-i}  \right\}.$$  Denote  $g_i:=H_{r+i \ell}1_{C_i}$.
By stationarity $\mathbb{E}_{\pi}[g_i]=\mathbb{E}_{\pi}[1_{C_i}]=\pi(C_i)
 \geq 1- (1-\epsilon)2^{-i}  $. Hence 
\begin{equation}
\label{eq: Bisusbetof}
J_i^{c} \subseteq \{x: g_i(x)\geq \mathbb{E}_{\pi}[g_i]+ \epsilon 2^{-i} \}.
\end{equation}
By
(\ref{eq: L2contraction0}) and the fact that $z_1(1-z_1)<z_2(1-z_2)$ if $z_1,z_2 \in [0,1]$ and $|z_1-1/2|>|z_2-1/2|$, $$\Var_{\pi} g_i \leq e^{-2 \la(r+i
\ell)}\Var_{\pi}1_{C_i} \leq \epsilon^2 2^{-2i}e^{-2t}\pi(C_i)\pi(C_i^{c})
 \leq \epsilon^2 2^{-3i}(1-2^{-i}) e^{-2t}.$$ By the Chebyshev's inequality and (\ref{eq: Bisusbetof})
$$\pi(J_i^{c}) 
\leq \epsilon^{-2}2^{2i}\Var_{\pi}g_{i} \leq 2^{-i}e^{-2t}(1-2^{-i}).$$
Denote $J:=\bigcap_{i=1}^{\infty}J_i$. By a union bound $$\pi(J^{c}) \leq
 \sum_{i \in \N} \pi(J_i^c) \leq  e^{-2t}(1+3/16+\sum_{i \geq 3}2^{-i})<3e^{-2t}/2. $$
Fix some $z \in J$ and $i \geq 1$. Note that because $ J \subseteq J_i $ we get from
the definitions of $J_i$ and $C_i$ together with the Markov property that
$$\h_{z} \left[T_{A} \leq r+i(\ell+s) \right] \geq \h_{z}[X_{r+i\ell} \in
C_i ] \min_{y \in C_i}\h_{y}[T_{A} \leq is] \geq (1-2^{-i})^{2} .$$
From this it is easy to verify that indeed $\mathbb{E}_z \left[T_A \right]
\leq r+6(s+\ell).$
\end{proof} 
The following lemma asserts that, for a fixed starting distribution $\mu$
such that $\rel$ is much smaller than $\mixmu $, a set $A$ which
is ``worst" in expectation (i.e.~$\mathbb{E}_{\mu}[T_{A}]=t_{\h,\mu}(\pi(A))$) is almost the ``\emph{worst in probability}"
(in the sense of Definition \ref{def: worstinprob}) for all times. By this
we mean that this is the case up to a small size and time  shifts and up
to a small difference in the chance of not being hit by any given time.
\begin{lemma}
\label{lem: worstinexp}
Let $(\Omega,P,\pi)$ be a finite  irreducible reversible chain.  Let $\mu
\in \PP(\Omega)$ and $0<\epsilon <1$. Let $A \subseteq \Omega $ be such that $\mathbb{E}_{\mu}[T_{A}]=t_{\h,\mu}(1-\epsilon)$
and $\pi(A) \geq 1-\epsilon$. Denote $\rho=\rho(\epsilon):= 3 (1-\epsilon)^{-1}
\rel \log 3$. Then for all $r \geq 1$
\begin{equation}
\label{eq: AB1}
\max_{B \subseteq \Omega: \pi(B) \geq 1-\epsilon/2} \h_{\mu}[T_{B}-T_{A} \geq r \rho] \leq r^{-1}. 
\end{equation}
In particular, for all $t \geq 0$, $r \geq 1$,  $q \in (0,1) $ and $B \subseteq \gO $ with $\pi(B^c) \leq \eps/2 $ we have that
\begin{equation}
\label{eq: AB2}
\begin{split}
\h_{\mu}[T_{A} \leq t]-\h_{\mu}[T_{B} \leq t+r\rho ] & \leq \h_{\mu}[T_{B}
\geq t+r \rho,T_{A} \leq t] \leq r^{-1} \text{ and}
\\ \h_{\mu}[T_{A} \leq \mathrm{hit}_{1-\epsilon/2,\mu}(1-q)-r\rho ] &  \leq q+r^{-1}.
\end{split}
\end{equation}
\end{lemma}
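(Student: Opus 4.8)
The plan is to establish $(\ref{eq: AB1})$ first and then read off $(\ref{eq: AB2})$ from it by set inclusions. Fix a set $B$ with $\pi(B)\ge 1-\epsilon/2$. Since $A\cap B\subseteq A$ we have $T_{A\cap B}\ge T_A$, and on the event $\{T_B>T_A\}$ the residual time after $T_A$ needed to reach $B$ is exactly $T_B-T_A$; hence, applying the strong Markov property at $T_A$ twice and using $T_B\le T_{A\cap B}$,
\begin{align*}
\mathbb{E}_\mu[(T_B-T_A)^+]&=\mathbb{E}_\mu\bigl[1_{\{T_B>T_A\}}\,\mathbb{E}_{X_{T_A}}[T_B]\bigr]\le\mathbb{E}_\mu\bigl[\mathbb{E}_{X_{T_A}}[T_B]\bigr]\\
&\le\mathbb{E}_\mu\bigl[\mathbb{E}_{X_{T_A}}[T_{A\cap B}]\bigr]=\mathbb{E}_\mu[T_{A\cap B}]-\mathbb{E}_\mu[T_A].
\end{align*}
By Markov's inequality $\h_\mu[T_B-T_A\ge r\rho]\le(r\rho)^{-1}\mathbb{E}_\mu[(T_B-T_A)^+]$, so $(\ref{eq: AB1})$ will follow once we show
$$\mathbb{E}_\mu[T_{A\cap B}]\le\mathbb{E}_\mu[T_A]+\rho.$$

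This is where the hypothesis that $A$ is worst in expectation at level $1-\epsilon$ w.r.t.~$\mu$ enters. First, $\pi(A\cap B)\ge\pi(A)+\pi(B)-1\ge1-3\epsilon/2$, so $A\cap B$ is non-empty for the (small) values of $\epsilon$ relevant to the applications. Apply Lemma~\ref{lem: frombigtobigger} to the set $A\cap B$: with a suitable choice of its parameter $k$ this yields a set $I$ with $\pi(I)\ge1-\epsilon$ and $\mathbb{E}_z[T_{A\cap B}]\le\rho$ for every $z\in I$ (up to the numerical constant in front of $\rel$). Since $\pi(I)\ge1-\epsilon$ and $\mathbb{E}_\mu[T_A]=t_{\h,\mu}(1-\epsilon)=\max_{C:\pi(C)\ge1-\epsilon}\mathbb{E}_\mu[T_C]$, the set $I$ is an admissible competitor, whence $\mathbb{E}_\mu[T_I]\le\mathbb{E}_\mu[T_A]$. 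Reaching $I$ first and then $A\cap B$ starting from $X_{T_I}\in I$ gives
$$\mathbb{E}_\mu[T_{A\cap B}]\le\mathbb{E}_\mu[T_I]+\max_{z\in I}\mathbb{E}_z[T_{A\cap B}]\le\mathbb{E}_\mu[T_A]+\rho,$$
which is the required bound.

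Given $(\ref{eq: AB1})$, the inequalities in $(\ref{eq: AB2})$ are routine. For the first line, partition according to whether $T_B\le t+r\rho-1$ and use $\{T_A\le t\}\cap\{T_B\ge t+r\rho\}\subseteq\{T_B-T_A\ge r\rho\}$ together with $(\ref{eq: AB1})$. For the last inequality, set $h:=\mathrm{hit}_{1-\epsilon/2,\mu}(1-q)$; by definition of $\mathrm{hit}_{1-\epsilon/2,\mu}(\cdot)$, for each $\delta>0$ there is a set $B$ with $\pi(B)\ge1-\epsilon/2$ and $\h_\mu[T_B>h-\delta]>1-q$, and since $\{T_A\le h-\delta-r\rho\}\cap\{T_B\ge h-\delta\}\subseteq\{T_B-T_A\ge r\rho\}$, applying $(\ref{eq: AB1})$ to this $B$ gives $\h_\mu[T_A\le h-\delta-r\rho]<q+r^{-1}$; letting $\delta\downarrow0$ finishes the proof.

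The step I expect to be the main obstacle is the one in the second paragraph: turning worst-ness of $A$ into the bound $\mathbb{E}_\mu[T_{A\cap B}]\le\mathbb{E}_\mu[T_A]+\rho$. Morally this is a statement about how much the ``worst expected hitting time'' $t_{\h,\mu}(\alpha)$ can increase when $\alpha$ is lowered from $1-\epsilon$ to $1-3\epsilon/2$, and Lemma~\ref{lem: frombigtobigger} is precisely the mechanism controlling it (it converts a set of stationary mass $\ge1-3\epsilon/2$ into a set of mass $\ge1-\epsilon$ with uniformly bounded expected hitting time, which worst-ness then pins down). Two points require care: (i) extracting the exact constant $3(1-\epsilon)^{-1}\rel\log3$ rather than the slightly larger $3(1-3\epsilon/2)^{-1}\rel\log3$ produced by a one-shot application of Lemma~\ref{lem: frombigtobigger} --- this needs optimisation over its parameter $k$ and/or use of any slack in $\pi(A)\ge1-\epsilon$; and (ii) disposing of the degenerate regime in which $A\cap B$ may be empty, which only arises for $\epsilon$ bounded away from $0$, where $\rho$ is large and a crude bound suffices.
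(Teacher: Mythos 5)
Your overall strategy --- combine Lemma \ref{lem: frombigtobigger} with the extremality of $A$ and finish with Markov's inequality applied to $\mathbb{E}_{\mu}[(T_B-T_A)^{+}]$ --- is the same as the paper's, and your reductions of (\ref{eq: AB2}) to (\ref{eq: AB1}) are fine. The gap is exactly at the step you flagged: you reduce (\ref{eq: AB1}) to $\mathbb{E}_{\mu}[T_{A\cap B}]\le\mathbb{E}_{\mu}[T_A]+\rho$ and propose to obtain this by applying Lemma \ref{lem: frombigtobigger} to $A\cap B$. This cannot be made to work as stated. First, for $\epsilon\ge 2/3$ the set $A\cap B$ may be empty, so $T_{A\cap B}=\infty$ and the entire chain of inequalities is vacuous; no ``crude bound'' is available, because for $r\ge 2$ the conclusion $\h_{\mu}[T_B-T_A\ge r\rho]\le r^{-1}$ is not trivial and nothing else in your argument controls $T_B$ after time $T_A$ (the distribution of $X_{T_A}$ is concentrated on $A$, and $\max_{z\in A}\mathbb{E}_z[T_B]$ need not be small). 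Second, even for $\epsilon<2/3$, Lemma \ref{lem: frombigtobigger} applied to a set of mass $1-3\epsilon/2$ gives $\max_{z\in I}\mathbb{E}_z[T_{A\cap B}]\le(3+k)(1-3\epsilon/2)^{-1}\rel\log 3$, which for every $k\ge0$ exceeds $\rho=3(1-\epsilon)^{-1}\rel\log 3$; increasing $k$ only enlarges $I$ at the cost of a worse constant, and there is no slack to exploit in $\pi(A)\ge 1-\epsilon$ since equality may hold. So neither of your proposed repairs (i), (ii) goes through.

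The missing idea is to intersect $B$ not with $A$ but with the good set \emph{for} $A$. Apply Lemma \ref{lem: frombigtobigger} to $A$ itself with $k=0$: this gives $I$ with $\pi(I)\ge 1-\epsilon/2$ and $\mathbb{E}_z[T_A]\le 3(1-\epsilon)^{-1}\rel\log 3=\rho$ for all $z\in I$. Set $D:=I\cap B$. Then $\pi(D)\ge 1-\epsilon>0$ (so no degenerate regime arises), hence $D$ is a legitimate competitor and $\mathbb{E}_{\mu}[T_D]\le t_{\h,\mu}(1-\epsilon)=\mathbb{E}_{\mu}[T_A]$; moreover $X_{T_D}\in I$, so $\mathbb{E}_{\mu}[(T_A-T_D)^{+}]\le\max_{z\in I}\mathbb{E}_z[T_A]\le\rho$, whence $\mathbb{E}_{\mu}[(T_D-T_A)^{+}]=\mathbb{E}_{\mu}[T_D-T_A]+\mathbb{E}_{\mu}[(T_A-T_D)^{+}]\le 0+\rho$. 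Since $D\subset B$ forces $T_B\le T_D$, Markov's inequality gives (\ref{eq: AB1}) with the stated constant and for all $0<\epsilon<1$. This route never mentions $A\cap B$, and the pointwise identity $x^{+}=x+(-x)^{+}$ replaces your strong-Markov computation of $\mathbb{E}_{\mu}[T_{A\cap B}]-\mathbb{E}_{\mu}[T_A]$.
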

\emph{Proof.}
We first note that the first row in (\ref{eq: AB2}) follows from (\ref{eq:
AB1}) trivially. The second row in (\ref{eq: AB2}) follows from the first
by taking $t=\mathrm{hit}_{1-\epsilon/2,\mu}(1-q)-r\rho$ and picking some
$B \subseteq \Omega$ such that $\pi(B^{c}) \leq \epsilon/2$ and $\h_{\mu}[T_{B}
\leq\mathrm{hit}_{1-\epsilon/2,\mu}(1-q)] \leq q $. 

We now prove (\ref{eq: AB1}). Let $I$ be as in Lemma \ref{lem: frombigtobigger}
w.r.t.~$A$ with the choice of $k=0$. Then $\pi(I^{c}) \leq  \epsilon/2 $ and
for any $z \in I$ we have
$\mathbb{E}_{z}[T_{A} ] \leq \rho$. Denote $D:=I \cap B$. Then by the assumption
that $\pi(B^{c}) \leq \epsilon/2$, we have that $\pi(D) \geq 1-\pi(B^{c})-\pi(I^{c})
\geq 1- \epsilon$. Hence
\begin{equation}
\label{eq: ETDETA}
\mathbb{E}_{\mu}[T_{D}] \leq t_{\h,\mu}(1-\epsilon)=
\mathbb{E}_{\mu}[T_{A}]. 
\end{equation}
For any $\ell \in \R$ denote $\ell^{+}:=\max \{\ell,0 \}$. Since $D \subseteq
I$, by the Markov property we have that
$\mathbb{E}_{\mu}[(T_{A}-T_{D})^{+}] \leq \max_{z \in I} \mathbb{E}_{z}[T_A] \leq \rho$. Thus by (\ref{eq: ETDETA}) $$\mathbb{E}_{\mu}[(T_{D}-T_{A})^{+}]=\mathbb{E}_{\mu}[T_{D}-T_{A}]+\mathbb{E}_{\mu}[(T_{A}-T_{D})^{+}]\leq 0+\rho=\rho.$$
By Markov inequality and the fact that $D \subseteq B$ we get that for all $r \geq 1$
$$\h_{\mu}[T_{B}-T_{A} \geq r \rho] \leq \h_{\mu}[T_{D}-T_{A} \geq r \rho]  \leq r^{-1}.  \qed $$
\vspace{3mm}

We are now ready to prove Theorem 1.
\medskip

\emph{Proof of Theorem \ref{thm: cutoffandsetsworstinexpectation}.}
The equivalence between (i) and (ii) follows from Proposition \ref{prop: hitmix1/2intro} together with \eqref{eq: hitThetamix2}. We now show that (ii)$\Longrightarrow$(iii). Let $\alpha \in (0,1)$. Assume that $$\mathrm{hit}_{1-\alpha,\mu_{n}}^{(n)}(\epsilon)-\mathrm{hit}_{1-\alpha,\mu_n}^{(n)}(1-\epsilon)
=o \left(\mathrm{hit}_{1-\alpha,\mu}^{(n)}(1/4)
\right) , \text{ for all }0<\epsilon<1/4.$$ Then by Proposition \ref{prop: equivoftalphascutoff}, $$\mathrm{hit}_{\beta,\mu_{n}}^{(n)}(\epsilon)-\mathrm{hit}_{\beta,\mu_n}^{(n)}(1-\epsilon)
=o \left(\mathrm{hit}_{\beta,\mu_n}^{(n)}(1/4)
\right) , \text{ for all }0<\epsilon<1/4 \text{ and }0< \beta < 1.$$
Let $A_n \subseteq \Omega_n $ be an
arbitrary sequence of sets such that $\mathbb{E}_{\mu_n}[T_{A_n}]=t_{\h,\mu_n}^{(n)}(1-\alpha)
$ and $\pi_n(A_n) \geq 1-\alpha$. By the equivalence between (i) and (ii) in Theorem \ref{thm: cutoffandsetsworstinexpectation}, we have that $$\mixmun(\epsilon)-\mixmun(1-\epsilon)=o(\mixmun), \text{ for all }0<\epsilon \leq 1/4.$$ Fix some $0<\epsilon \leq 1/8$.  Using Proposition \ref{prop: equivoftalphascutoff}
and similar reasoning as in the proof of the equivalence between (i) and (ii) in Theorem \ref{thm: cutoffandsetsworstinexpectation}, we have that for all $\beta \in (0,1)$
\begin{equation}
\label{eq: worstinexpcutoff5}
(1-o(1))\mixmun\leq \mathrm{hit}_{\beta,\mu_n}^{(n)}(1-\epsilon)\leq \mathrm{hit}_{\beta,\mu_n}^{(n)}(\epsilon)\leq(1+o(1))\mixmun,
\end{equation}
Let $k_{n}(p):=\inf \{t:\h_{\mu_n}[T_{A_n} > t] \leq p \}$. Then by the
definition of $\mathrm{hit}_{1-\alpha,\mu_n}^{(n)}(\epsilon)$ and the fact
that $\pi_n(A_n) \geq 1-\alpha$ (first inequality), together with (\ref{eq:
worstinexpcutoff5}) we get that
\begin{equation}
\label{eq: furtherexplanation}
k_n(\epsilon) \leq\mathrm{hit}_{1-\alpha,\mu_n}^{(n)}(\epsilon) \leq(1+o(1))\mixmun.
\end{equation}
Conversely, let $\ell_n=\ell_n(\epsilon):=3\epsilon^{-1}(1-\alpha)^{-1}
\rel^{(n)} \log 3 =o(\mixmun)$. Then by (\ref{eq: AB2}) (first inequality)
and (\ref{eq:
worstinexpcutoff5}) we get that
\begin{equation}
\label{eq: worstinexpcutoff3}
k_n(1-2\epsilon) \geq \mathrm{hit}_{1-\alpha/2,\mu_n}^{(n)}(1-\epsilon)-\ell_n
 \geq (1-o(1))\mixmun.\end{equation}
Whence (\ref{eq: furtherexplanation}) implies that $k_n(\epsilon)-k_n(1-2\epsilon)=o(\mixmun)$. By (\ref{eq: worstinexpcutoff3}) we get that $\mixmun= O(\mathbb{E}_{\mu_n}[T_{A_n}])$ and thus we also have that $k_n(\epsilon)-k_n(1-2\epsilon)=o(\mathbb{E}_{\mu_n}[T_{A_n}])$, for all $0 < \epsilon < 1/8$. This concludes the proof of (ii)$\Longrightarrow$(iii).
We now show that  (iii)$\Longrightarrow$(ii).

Let $\alpha \in (0,1) $ and  $A_n \subseteq \Omega_n $ be an
arbitrary sequence of sets such that $\mathbb{E}_{\mu_n}[T_{A_n}]=t_{\h,\mu_n}^{(n)}(1-\alpha)
$ and $\pi_n(A_n) \geq 1-\alpha$. Assume that
\begin{equation}
\label{eq: Tanconcentratedaroundtmix}
\lim_{n \to \infty}\h_{\mu_{n}}[|T_{A_n}-\mixmun|<\epsilon
\mathbb{E}_{\mu_n}[T_{A_n}]]=1,
\text{ for all }\epsilon>0.
\end{equation}
As before, denote  $k_{n}(p):=\inf
\{t:\h_{\mu_n}[T_{A_n} > t]\leq p \}$. Then by (\ref{eq: Tanconcentratedaroundtmix}),
\begin{equation}
\label{eq: worstinexp6}
k_n(\epsilon )-k_{n}(1-\epsilon/2)=o(k_n(1-\epsilon)), \text{ for all }0<\epsilon<1/4.
\end{equation}
Recall that by assumption, there exists some $0<\delta<1$ such that $\rel^{(n)}=o(\mixmun(\delta))$. Fix some $0<\epsilon<\delta/4$. By (\ref{eq: thitinequality1})
we have that
\begin{equation}
\label{eq: worstinexp4}
 \mathrm{hit}_{1-\alpha,\mu_n}^{(n)}(1-\epsilon/2) \leq  \mathrm{hit}_{1-\alpha/2,\mu_n}^{(n)}(1-\epsilon)+o(\mixmun(\delta)).
\end{equation} 
As in (\ref{eq: furtherexplanation}),
\begin{equation}
\label{eq: asbefore}
k_n(1-\epsilon/2)
\leq \mathrm{hit}_{1-\alpha,\mu_n}^{(n)}(1-\epsilon/2).
\end{equation}
By (\ref{eq: hitThetamix}), we have that $$ (1-o(1)) \mixmun(\delta) \leq \mathrm{hit}_{1-\alpha/2,\mu_n}^{(n)}(\delta/2) .$$ Hence by (\ref{eq: worstinexp4})-(\ref{eq: asbefore}) we get that
\begin{equation}
\label{eq: worstinexp7}
 k_n(1-\epsilon/2)
\leq  \mathrm{hit}_{1-\alpha/2,\mu_n}^{(n)}(1-\epsilon) +o(\mathrm{hit}_{1-\alpha/2,\mu_n}^{(n)}(\delta/2)).
\end{equation}

This, in conjunction with (\ref{eq: worstinexp6}), yields that for all $0<\epsilon<\delta/4$,
\begin{equation}
\label{eq: worstinexp9}
k_n(\epsilon )-k_{n}(1-\epsilon/2)=o(\mathrm{hit}_{1-\alpha/2,\mu_n}^{(n)}(\delta/2))
\end{equation}

Conversely, let $\ell_n(\epsilon)$ be as before. Fix some $0<\epsilon<\delta/4$. Then $\ell_n(\epsilon/2)=o(\mixmun(\delta))$ and by (\ref{eq:
hitThetamix2}) $\ell_n(\epsilon/2)=o(\mathrm{hit}_{1-\alpha/2,\mu_n}^{(n)}(\delta/2))$. Similarly to the derivation
of (\ref{eq: worstinexpcutoff3}), by (\ref{eq: AB2})
\begin{equation}
\label{eq: worstinexp10}
k_n(\epsilon) \geq\mathrm{hit}_{1-\alpha/2,\mu_n}^{(n)}(\epsilon/2)-\ell_n(\epsilon/2)=\mathrm{hit}_{1-\alpha/2,\mu_n}^{(n)}(\epsilon/2)-o(\mathrm{hit}_{1-\alpha/2,\mu_n}^{(n)}(\delta/2)).
\end{equation}
This, in conjunction with (\ref{eq: worstinexp7})-(\ref{eq: worstinexp9}), implies that $$\mathrm{hit}_{1-\alpha/2,\mu_n}^{(n)}(\epsilon/2)-\mathrm{hit}_{1-\alpha/2,\mu_n}^{(n)}(1-\epsilon)=o(\mathrm{hit}_{1-\alpha/2,\mu_n}^{(n)}(\delta/2)) , \text{ for all }0<\epsilon<\delta/4. \qed $$

\section{Aldous' Example}
\label{s: examples}

We now present a version of Aldous' example (see Figure 2) for a sequence of reversible Markov chains $(\Omega_n,P_n,\pin)$ which satisfies the product condition but do not exhibit cutoff
and analyze
it. Our version of Aldous' demonstrates the behavior described in Proposition \ref{rem: worstinexp}. Namely, we show that the sequence does not exhibit cutoff although there exist $A_n \subseteq \Omega_n $ with $\pin(A_n) \geq 1/2 $ and $x_n \in \Omega_n$ satisfying $t_{\h}(1/2)=\mathbb{E}_{x_{n}}[T_{A_{n}}] $ such that the hitting times of $A_n$ started from $x_n$ are concentrated under the initial starting positions $x_{n} \in \Omega_n$.
\begin{figure*}[h]
\begin{center}
\includegraphics[height=5cm,width=7.5cm]{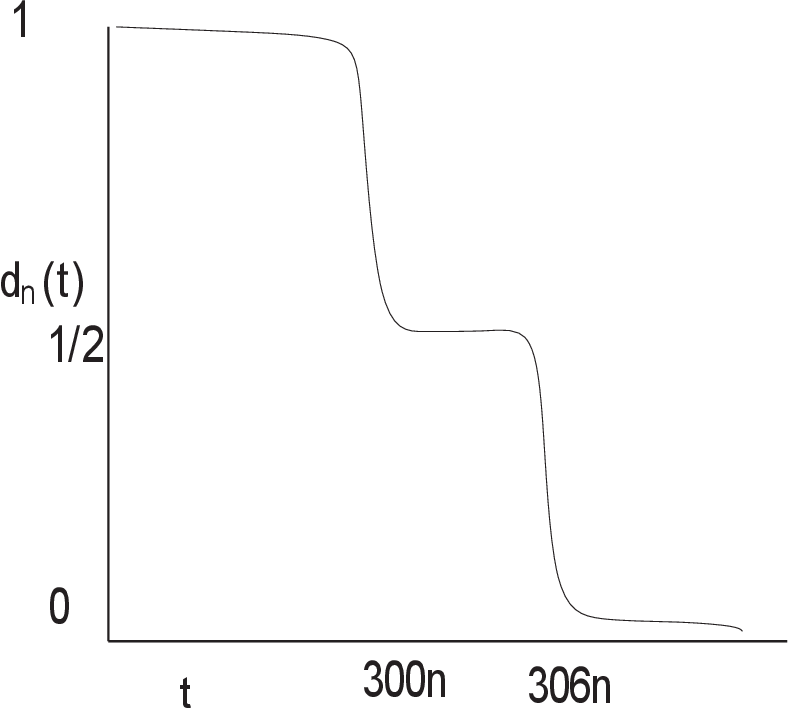}
\caption{Decay in total variation distance for Aldous' example: it does not
have cutoff.}
\label{f:dtjump}
\end{center}
\end{figure*}
\begin{figure*}[h]
\begin{center}
\includegraphics[height=9cm,width=13.1cm]{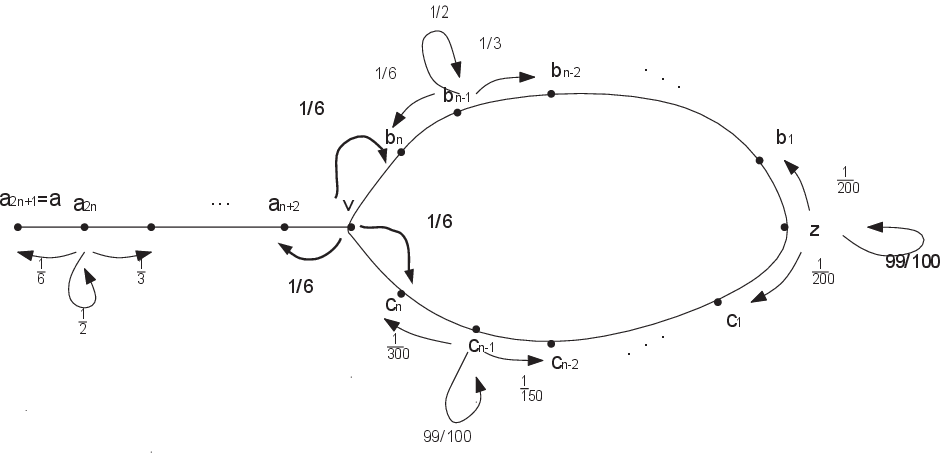}
\caption{We consider a Markov chain with the
transition probabilities specified above.}
\label{f:Aldousexample}
\end{center}
\end{figure*}
\begin{example}
\label{ex: aldous}
Consider the sequence of chains $(\Omega_n,P_n,\pi_n)$, where  $\Omega_{n}:=A
\cup B \cup C \cup \{z\} $, where $A=A_n:=\{a_{2n+1},a_{2n},a_{2n-1},\ldots,a_{n+1} \} $, $B=B_n:=\{b_n,b_{n-1},\ldots,b_1
\} $ and $C=C_n:=\{c_n,c_{n-1},\ldots,c_1
\} $. For notational convenience we write $a:=a_{2n+1}$, $v:=a_{n+1}= b_{n+1}=c_{n+1}$ and $b_0=z=c_0 $.

Define the transition matrix $P_n$ by

\begin{itemize}
\item
Holding probabilities: $$P_{n}(x,x)=\begin{cases}1/2 & x \in A_n \cup B_n, \\
99/100 & x \in C_n \cup \{z\}.
\end{cases}$$
\item
Values at the special three states $a=a_{2n+1},v=a_{n+1},z=b_{0}=c_0$:
 $P_{n}(a,a_{2n})=1/2$, $$P_n(v,a_{n+2})=P_n(v,b_{n})=P_n(v,c_{n})=\frac{1}{6},$$
 $$P_n(z,b_{1})=\frac{1}{200} = P_n(z,c_{1})=\frac{1}{200}.$$

\item
 $P_n(a_{n+k},a_{n+k-1})=P_{n}(b_{k},b_{k-1})=1/3=2P_n(a_{n+k},a_{n+k+1})=2P_{n}(b_{k},b_{k+1})$ for all $k \in [n]$.
\item
 $P_n(c_{k},c_{k-1})=\frac{1}{150} =2P_n(c_{k},c_{k+1})$
for all $k \in [n]$.
\end{itemize}
Think of $\Omega_{n}$ as a nearest neighbor walk (on an interval of length $2n+1$), biased towards state $z$, which in the middle of the interval (at state $v$) splits into
 two parallel paths, $B$ and $C$ (which we refer to as ``branches") of length
$n$ leading to $z$ (see Figure \ref{f:Aldousexample}). The difference between the two branches is that on branch $C$ the holding probability is much larger (i.e.~$P_n(c,c)=99/100 $ for all $c \in C$, while $P_n(b,b)=1/2$ for all $b \in B$).

Conditionally on not making a lazy step, the chain moves with a fixed bias
towards $z$. More precisely, let
$(\Omega_n,Q_n,\pin')$ be the non-lazy
version of  $(\Omega_n,P_n,\pin)$. That is, $Q_n(x,x)=0 $ for all $x$ and
$Q_n(x,y)=\frac{P_n(x,y)}{1-P_n(x,x)}$ for all $x \neq y$. Let $f:\Omega_n \to \{0,1,\ldots,2n+1 \}$ be $f(b_i)=i=f(c_i)$ and $f(a_{n+1+i})=n+1+i $ for all $0
\leq i \leq n$. Let $(Y_t)$ be a realization
of $(\Omega_n,Q_n,\pin')$. It is easy to see that the projection
$Z_t=f(Y_{t}) $ is a nearest neighbor biased random walk on the interval $\{0,1,\ldots,2n+1 \} $ (with reflecting boundary conditions) with a fixed bias of $2/3$ of making a step towards $0$. In particular, $T_{0}$ under $\h_{k}$ (w.r.t.~the chain $(Z_t)$) is concentrated around $\mathbb{E}_{k}[T_0]=3k - O(1) $ within a time window of size $O(\sqrt{n})$ (where the equality $\mathbb{E}_{k}[T_0]=3k
- O(1) $ follows from (\ref{eq: bdformula}) below).

It is easy to check that the chains $(\Omega_n,P_n,\pi_n)$ are indeed reversible. One way to see this is to note that Kolmogorov's cycle condition holds. Alternatively, the corresponding (symmetric) edge weights are $w_n(a_{n+m},a_{n+m+1})=2^{-(n+m)} $, $w_n(b_m,b_{m+1})=2^{-m}=w_n(c_m,c_{m+1}) $ and $w_n(x,x)=\begin{cases}\sum_{y:y \neq x}w_n(x,y) & x \in A \cup B,  \\
99 \sum_{y:y
\neq x}w_n(x,y) & \text{otherwise}. \\
\end{cases}$ 

By the well-known discrete analog of Cheeger inequality (e.g.~\cite{levin2009markov} Theorem 13.14), $t_\mathrm{rel}^{(n)}=O(1)$,
as the bottleneck-ratio is bounded from below (which can readily be seen from the above edge weights). In particular, the product
condition holds. 

For  $0<\epsilon <1$ let $k_{n}(\epsilon):=\inf \{t: \max_{x \in \Omega_n}\Pr_x[T_{z}>t] \leq \epsilon \}$.  As $\pi_n(z)>1/2$, we get that for all $0<\epsilon<1$ we have  $\mathrm{hit}_{1/2}^{(n)}(\epsilon)=k_n(\epsilon)$. To see this, observe that if $A $ is such that $\pi(A) \geq 1/2 $ then it must be the case that $z \in A $ and hence for all $x \in \Omega$ and $t \geq 0$ we have that $\max_{A \subseteq \Omega:\pi(A) \geq 1/2 }\Pr_x[T_A>t] \leq \Pr_x[T_z>t]  $. Conversely, $\pi( \{z\}) \geq 1/2 $ and so the opposite inequality holds as well.  

 We define $\mathrm{CB}$ (a shorthand for ``chosen branch") to equal $B$ (resp.~$C$) if the first visit to $z$ was made by crossing the edge $(b_{1},z)$ (resp.~$(c_{1},z)$). 

Note that for all $x \in A$ we have that $\h_{x}[\mathrm{CB}=B]=1/2=\h_{x}[\mathrm{CB}=C] $. Let $S \in \{B,C \}$. It is easy to see for every $\ell \in [n] $, conditioned on $\mathrm{CB}=S$, the conditional distribution of $T_z$ under $\h_{a_{n+1+\ell}}[\, \cdot \mid  \mathrm{CB}=S] $, is concentrated around $6 \ell +6n1_{S=B}+300n1_{S=C}$. 

Using the aforementioned projection $(Z_t)$ together with elementary results about hitting probabilities for a nearest neighbor biased walk on a segment (e.g.~\cite{levin2009markov} Exam.~9.9) we get that
\begin{equation}
\label{eq: CB1}
\h_{b_{\ell}}[T_v < T_{z}]=\frac{2^{\ell}-1}{2^{n+1}-1} =\h_{c_{\ell}}[T_v < T_{z}], \text{ for all }\ell \in [n].
\end{equation}
Consequently,
\begin{equation}
\label{eq: CB2}
\h_{c_{\ell}}[\mathrm{CB}=B]=\frac{1}{2}\cdot \frac{2^{\ell}-1}{2^{n+1}-1} =\h_{b_{\ell}}[\mathrm{CB}=C], \text{ for all }\ell \in [n].  \end{equation}
In particular, we get that for all $\ell > \lceil \log_2 n \rceil$ the law of $T_z$ under $\h_{c_{n+1-\ell}}$ (resp.~$\h_{b_{n+1-\ell}}$) is concentrated around $300(n-\ell)$ (resp.~$6(n-\ell)$), within a time window of size $O(\sqrt{n})$ .

Let $S \in \{B,C \}$. It follows from (\ref{eq: EcTz5}) below that $\mathbb{E}_{c_{n+1-r}}[T_{v}
\mid \mathrm{CB}=B ] \leq 300r+O(1)$, for all $0 \leq r \leq n$. Using Markov inequality, and the analysis of the case $x \in A$, with $x=a_{n+1}=v$,
it is easy to verify that for all  $0 \leq \ell \leq
\lceil \log_2 n \rceil$, conditioned on $\mathrm{CB}=S $, the conditional distribution of $T_z$
under $\h_{c_{n+1-\ell}}[\, \cdot \mid  \mathrm{CB}=S] $ is concentrated around $6n 1_{S=B}+300n1_{S=C}$. The same holds for $b_{n+1-\ell}$.

By the analysis above, 
\begin{equation}
\label{eq: k34}
k_{n}(1/2-o(1)) \geq 306n-o(n)  \text{ and } k_{n}(1/2+o(1)) \leq 300n+o(n). 
\end{equation} 
In particular, there is no $\hit_{1/2}$-cutoff. By Theorem \ref{thm: psigmacutoffequiv}, the sequence does not exhibit a cutoff.

\medskip 

 Let $x_n \in \Omega_n$ be such that ~$t_{\h}^{(n)}(1/2)=\max_{y \in \Omega_n} \mathbb{E}_{y}[T_{z}]=\mathbb{E}_{x_{n}}[T_{z}]$. We now argue that the $x_{n}=c_{j_{n}}  $ for some $j_n \in [n] $ such that $\min (n-j_{n},j_n ) \to \infty $ (in fact, we shall show that $n-j_{n} = \Theta (\log n)$). Note that starting from such $x_n$, the hitting time of $z$ is concentrated,
although the sequence
of chain does not exhibit a cutoff.

Most readers should be satisfied by the following explanation. It is clear that either $x_n \in C $ or $x_n=a$. If $\ell_{n} = o(n) $ and $\ell_n \to \infty$, then the distribution of $T_z$ under $\h_{c_{n-\ell_n}}$ is concentrated around $300n-o(n)$ and $\mathbb{E}_{c_{n-\ell_n}}[T_{z}]=300n-o(n) $. On the other hand, $\mathbb{E}_{a}[T_{z}] \leq 159n$. Lastly, if $\ell_n=O(1) $, then $\h_{c_{n+1-\ell_{n}}}[  \mathrm{CB}=B] $ is bounded from below, and so $\limsup_{n \to \infty} \mathbb{E}_{c_{n+1-\ell_n}}[T_{z}]/n < 300 $.

We now present a more detailed proof for the fact that $x_n=c_{j_{n}}
 $ for some $j_n  $ such that $n-j_{n} = \Theta (\log n)$.  First write
\begin{equation}
\label{eq: EcTz1}
\mathbb{E}_{c_{ r}}[T_{z}]=\mathbb{E}_{c_{
r}}[T_{z} \mid T_z < T_v ]\frac{2^{n+1}-2^{r}}{2^{n+1}-1}+\mathbb{(E}_{c_{r}}[T_{v} \mid T_v < T_z ]+\mathbb{E}_{v}[T_{z} ])\frac{2^{r}-1}{2^{n+1}-1}.
\end{equation}
We shall show
that there exist absolute constants $K_1,K_{2},K_{3}>0 $ such
that for all $r \in [n] $ \begin{equation}
\label{eq: bdformula2}
300-K_1 2^{-r} \leq \mathbb{E}_{c_{r+1}}[T_{c_{
r}} ] \leq 300 \text{ and } 6-K_1 2^{-r} \leq \mathbb{E}_{a_{n+r+1}}[T_{a_{
n+r}} ], \mathbb{E}_{b_{r+1}}[T_{b_{
r}} ] \leq 6.
\end{equation}
\begin{equation}
\label{eq: bdformula3}
\frac{300 n}{2}+\frac{6 n}{2} -K_2 \leq \mathbb{E}_{v}[T_{z} ] \leq \frac{300 (n+1)}{2}+\frac{6 (n+1)}{2} =153(n+1).
\end{equation}
\begin{equation}
\label{eq: bdformula3'}
\mathbb{E}_{a}[T_{z} ]=\mathbb{E}_{a}[T_{v} ]+  \mathbb{E}_{v}[T_{z} ] \leq 159(n+1).
\end{equation}
\begin{equation}
\label{eq: EcTz2}
\mathbb{E}_{c_{n+1-r}}[T_{c_{n-
r}} \mid T_z < T_v ]=300\pm K_{3} 2^{-r}=\mathbb{E}_{c_{r}}[T_{c_{
r+1}} \mid T_v < T_z ].
\end{equation}
\begin{equation}
\label{eq: EcTz4}
\mathbb{E}_{c_{n+1-r}}[T_{z} \mid T_z < T_v ]=300 (n+1-r) \pm 2 K_{3} 2^{-r}.
\end{equation}
\begin{equation}
\label{eq: EcTz5}
\mathbb{E}_{c_{n+1-r}}[T_{v} \mid T_v < T_z ]=300r \pm2 K_{3} 2^{-(n-r)}.
\end{equation}  
Combining (\ref{eq: bdformula3})-(\ref{eq: EcTz5}) with (\ref{eq: EcTz1}) it is easy to verify that indeed  $x_{n}=c_{j_{n}}
 $ for some $j_n \in [n] $ such that $n-j_{n} = \Theta (\log n)$. 

We first note that (\ref{eq: bdformula3}) and (\ref{eq: bdformula3'}) follow easily from (\ref{eq: bdformula2}). We now prove (\ref{eq: bdformula2}). 
 
It is a standard result (e.g.~\cite{aldous2000reversible} Lemma 1 Chapter
5, or \cite{cutoff} Lemma 5.2) that for a birth and death chain on $\{0,1,\ldots,2n+1 \} $
with symmetric edge weights $(w_{i,j})_{i,j : |i-j| \leq 1}$, 
\begin{equation}
\label{eq: bdformula}
\mathbb{E}_{r+1}[T_{r}]= \frac{\sum_{i,j:i \geq r,j \geq r+1,|i-j| \leq 1
}w_{i,j}}{w_{r,r+1}}-1.
\end{equation}
It follows from (\ref{eq: bdformula}) that the projected chain $(Z_t) $ satisfies that $3-2^{-(2n-k)} \leq \mathbb{E}_{k+1}[T_{k}] \leq 3 $, for all $0 \leq k \leq 2n$ which together with (\ref{eq: CB1}) imply (\ref{eq: bdformula2}). As (\ref{eq: EcTz4}) and (\ref{eq: EcTz5}) follow from (\ref{eq: EcTz2}), we conclude the proof by verifying (\ref{eq: EcTz2}). 

Using the Doob's transform (see e.g.~\cite{levin2009markov} Section 17.6) we have that the law of $(X_{t })$ up to time $T_v$ (resp.~$T_z$) conditioned on $T_v < T_z $ (resp.~$T_z<T_v $) is a Markov chain whose transition matrix is given by $P_v(x,y):=\frac{P_{n}(x,y)\h_{y}[T_v<T_z] }{\h_{x}[T_v<T_z]} $ (resp.~$P_z(x,y)=\frac{P_{n}(x,y)\h_{y}[T_z<T_v] }{\h_{x}[T_z<T_v]} $).

By (\ref{eq: CB1}) we get that for all $ r \in [  n] $ there exists an absolute constant $K_{4}>0$ such that  $P_z(c_{n+1-r},y)=P_n(c_{n+1-r},y) \pm K_{4}2^{- r} $ for all $y $, while $P_v(c_{r},c_{r+i})=P_n(c_{r},c_{r-i})\pm K_{4}2^{- r}$ for $i \in \{0,\pm 1\}$ (i.e., up to negligible
terms, $P_v$ restricted to $C$ is a nearest neighbor walk with an opposite bias compared to the original chain). This, in conjunction with (\ref{eq: bdformula2}), implies (\ref{eq: EcTz2}).
\end{example}
\section*{Acknowledgements}
The author would like to thank David Aldous, Riddhipratim Basu, Yuval Peres and Allan Sly for many useful discussions. We are also grateful for the anonymous referee for suggesting some improvements to the presentation.

\bibliographystyle{plain}
\bibliography{tave}
\end{document}